\documentclass[review,1p]{elsarticle}
\usepackage{hyperref,xspace}

%
\usepackage[english]{babel}%


\usepackage{amsmath,amsfonts,amssymb,amsthm,mathrsfs,amsbsy,dsfont}
\usepackage{textcomp}
\usepackage{subfigure,graphics,pdfpages,tikz}
\usepackage{multirow}
\usepackage{bbold}
\usepackage{xspace}
\usepackage{bm}
\usepackage{color}\newcommand{\code}[1]{\texttt{#1}}
\usepackage{geometry}
\usepackage{lscape}
\geometry{top=2.5cm, bottom=3cm, left=3cm , right=3cm}
\usepackage{algorithm,algorithmic, lscape}



\usetikzlibrary{decorations}
\usetikzlibrary{decorations.pathmorphing}
\usetikzlibrary{decorations.pathreplacing}
\usetikzlibrary{decorations.shapes}
\usetikzlibrary{decorations.text}
\usetikzlibrary{decorations.markings}
\usetikzlibrary{decorations.fractals}
\usetikzlibrary{decorations.footprints}

 
\newtheorem{lem}{Lemme} 
 
\newtheorem{rk}{Remark}

\newcommand{\Dmax}{D_{\max}}
\renewcommand{\H}{\mathcal{H}}

\newcommand{\1}{\mathds{1}}
\newcommand{\R}{\mathbb{R}}
\newcommand{\X}{\mathcal{X}}
\newcommand{\K}{\mathbf{K}}
\newcommand{\T}{\mathcal{T}}
\newcommand{\Tn}{\mathcal{T}_n}

\newcommand{\Z}{\mathbf{Z}}
\renewcommand{\O}{\mathcal{O}}
\newcommand{\E}{\mathds{E}}
\newcommand{\argmin}{\operatornamewithlimits{arg\ min}}
\newcommand{\paren}[1]{\left( #1 \right)}
\newcommand{\abs}[1]{\left\lvert #1 \right\rvert} 
\newcommand{\croch}[1]{\left[\, #1 \,\right]}
\newcommand{\acc}[1]{\left\{ #1 \right\}}
\newcommand{\norm}[1]{\left\| #1 \right\|}
\newcommand{\normH}[1]{\left\Vert #1 \right\Vert_{\H}}
\newcommand{\normHn}[1]{\left\Vert #1 \right\Vert_{\H,n}		}
\newcommand{\scal}[1]{\left \langle #1 \right\rangle}
\newcommand{\scalH}[1]{\left \langle #1 \right\rangle_{\H}}
\newcommand{\tauh}{\hat{\tau}}
\newcommand{\muh}{\hat{\mu}^\tau}
\newcommand{\muhD}{\hat{\mu}_{\hat{\tau}_D}}
\newcommand{\for}{\textbf{for} }


\usepackage[normalem]{ulem}

\newcommand{\Card}[1]{\mathrm{Card}\paren{ #1 }}

\newcommand{\Kern}{\emph{Kernseg}\xspace}
\newcommand{\aKern}{\emph{ApKS}\xspace}
\newcommand{\ECP}{ECP\xspace}
\newcommand{\RBS}{RBS\xspace}
\newcommand{\KernSegLin}{KS.Lin\xspace}
\newcommand{\KernSegGauss}{KS.Gau\xspace}
\newcommand{\KernSegECP}{KS.ECP\xspace}


\begin{document}
\begin{frontmatter}

\title{New efficient algorithms for multiple change-point detection with kernels}

\author[INRIA,UL1]{A. C\'elisse}
\author[UL2,INRIA]{G. Marot}
\author[UL2,SG]{M. Pierre-Jean}
\author[SG,IPS2]{G.J. Rigaill}
\address[UL2]{\textbf{Univ. Lille Droit et Sant\'e}
EA 2694 - CERIM, F-59000 Lille, France}
\address[SG]{\textbf{UMR 8071 CNRS - Universit\'{e} d'Evry - INRA}
Laboratoire Statistique et G\'{e}nome
Evry}
\address[INRIA]{\textbf{Inria Lille Nord Europe}
\'Equipe-projet Inria \textsc{MODAL} }
\address[IPS2]{\textbf{Institute of Plant Sciences Paris-Saclay}, UMR 9213/UMR1403, CNRS, INRA, Universit{\'e} Paris-Sud, Universit{\'e} d'Evry,
Universit{\'e} Paris-Diderot, Sorbonne Paris-Cit{\'e}
}
\address[UL1]{\textbf{Univ. Lille Sciences et Technologies, CNRS,} UMR 8524 - Laboratoire Paul Painlev\'e, F-59000 Lille, France}

\begin{abstract}
Several statistical approaches based on reproducing kernels have been proposed to
detect abrupt changes arising in the full distribution of the observations and not only in the mean or variance.
Some of these approaches enjoy good statistical properties (oracle inequality, \ldots).
Nonetheless, they have a high computational cost both in terms of time and memory. 
This makes their application difficult even for small and medium sample sizes ($n< 10^4$).
This computational issue is addressed by first describing a new efficient and exact algorithm for kernel multiple change-point detection with an improved worst-case complexity that is quadratic in time and linear in space. It allows dealing with medium size signals (up to $n \approx 10^5$). 
Second, a faster but approximation algorithm is described. It is based on a low-rank approximation to the Gram matrix. 
It is linear in time and space. This approximation algorithm can be applied to large-scale signals ($n \geq 10^6$).
These exact and approximation algorithms have been implemented in \texttt{R} and \texttt{C} for various kernels.
The computational and statistical performances of these new algorithms have been assessed through empirical experiments.
The runtime of the new algorithms is observed to be faster than that of other considered procedures. 

Finally, simulations confirmed the higher statistical accuracy of kernel-based approaches
to detect changes that are not only in the mean. These simulations also illustrate the flexibility of kernel-based approaches to analyze complex biological profiles made of DNA copy number and allele B frequencies. 

%
{An R package implementing the approach will be made available on github.}

\end{abstract}

\begin{keyword}
 Kernel method \sep Gram matrix \sep nonparametric change-point detection \sep model selection \sep algorithms \sep dynamic programming \sep DNA copy number \sep  allele B fraction
\end{keyword}

\end{frontmatter}

\section{Introduction}\label{sec:background}


In this paper we consider the multiple change-point detection problem \citep{BrodskyDarkhovsky_2013} where the goal is to recover abrupt changes arising in the distribution of a sequence of $n$ independent random variables $X_1,\ldots,X_n$ observed at respective time $t_1<t_2<\ldots < t_n$.


\paragraph{State-of-the-art}
Many parametric models (Normal, Poisson,\ldots) have been proposed \citep{HKYAWK_2003,rigaill2012exact,Cleynen_Lebarbier:2014}. These models allow detecting different types of changes: in the mean, in the variance and in both the mean and variance  (see also \citep{HKYAWK_2003,JMVYWM_2003,picard05a-statistical}).
Efficient algorithms and heuristics have been proposed for these models. Some of them scale in $\mathcal{O}(n\log(n))$ or even in $\mathcal{O}(n)$.
In practice, these parametric approaches have proven to be successful for various application fields (see for example \cite{hocking13learning,cleynen2014comparing}). 
However one of their main drawbacks is their lack of flexibility. For instance, any change of distributional assumption requires the development of a new dedicated inference scheme.

By contrast, the recently proposed kernel change-point detection approach \cite{harchaoui2007retrospective,Arlot2011} is more generic.
It has the potential to detect any change arising in the distribution, which is not easily captured by standard parametric models.
More precisely in this approach, the observations are first mapped into a Reproducing Kernel Hilbert Space (RKHS) through a kernel function \citep{Aronszajn:1950}. 
The difficult problem of detecting changes in the distribution is then recast as simply detecting changes in the mean element of observations in the RKHS, which is made possible using the well-known kernel trick.

One practical limitation of this kernel-based approach is its considerable computational cost owing to the use of a $n\times n$ Gram matrix combined with a dynamic programming algorithm \citep{Auger_Lawrence:1989}. 
More precisely \cite{harchaoui2007retrospective} described a dynamic programming algorithm to recover the best segmentation from $1$ to $\Dmax$ segments. They claim that their algorithm has a $\mathcal{O}(\Dmax n^2)$ time complexity.
However, the latter is not described in full details and its straightforward implementation is not efficient.
First, it requires the storage of a $n \times n$ cost matrix (personal communication with the first author of \cite{harchaoui2007retrospective} who was kind enough to send us his code).
Thus the algorithm has a $\mathcal{O}(n^2)$ space complexity, which is a severe limitation with nowadays sample sizes. For instance analyzing a signal of length $n=10^5$ requires storing a $10^5 \times 10^5$ matrix of doubles, which takes $80$ GB.
Second, computing the cost matrix is not straightforward. In fact simply using formula (8) of \cite{harchaoui2007retrospective} to compute each term of this cost matrix leads to an $\mathcal{O}(n^4)$ time complexity.

\paragraph{Contributions}

The present paper contains several contributions to the computational aspects and the statistical performance of the kernel change-point procedure introduced by \cite{Arlot2011}.

The first one is to describe a new algorithm to simultaneously perform the dynamic programming step of \cite{harchaoui2007retrospective} and also compute the required elements of the cost matrix on the fly.
On the one hand, this algorithm has a complexity of order $\mathcal{O}(\Dmax n^2)$ in time and $\mathcal{O}(\Dmax n)$ in space (including both the dynamic programming and the cost matrix computation). We also emphasize that this improved space complexity comes without an increased time complexity. 
This is a great algorithmic improvement upon the change-point detection approach described by \cite{Arlot2011} since it allows the efficient analysis of signals with up to $n=10^5$ data-points in a matter of a few minutes on a standard laptop.

On the other hand, our approach is generic in the sense that it works for any positive semidefinite kernels. 
Importantly one cannot expect to exactly recover the best segmentations from $1$ to $\Dmax$ segments in less than $\mathcal{O}(\Dmax n^2)$ without additional specific assumptions on the kernel. Indeed, computing the cost of a given segmentation has already a time complexity of order $\mathcal{O}(n^2)$.
%

It is also noticeable that our algorithm can be applied to other existing strategies such as the so-called \ECP \citep{matteson2014nonparametric}.
To be specific, we show that the \emph{divisive clustering algorithm} it is based on and that provides an approximate solution with a complexity of order $O(n^2)$ in time and space can be replaced by our algorithm that provides the exact solution with the same time complexity and reduced memory complexity.

Our second contribution is a new algorithm dealing with larger signals ($n>10^5$) based on a low-rank approximation to the Gram matrix.
This computational improvement is possible at the price of an approximation. It returns approximate best segmentations from 1 to $\Dmax$ segments with a complexity of order $\mathcal{O}(\Dmax p^2n)$ in time and $\mathcal{O}((\Dmax+p)n)$ in space, where $p$ is the rank of the approximation.

The last contribution of the paper is the empirical assessment of the statistical performance of the KCP procedure introduced by \cite{Arlot2011}.
This empirical analysis is carried out in the biological context of detecting abrupt changes from a two-dimensional signal made of DNA copy numbers and allele B fractions \citep{lai12change-point}. The assessment is done by comparing our approach to state-of-the-art alternatives on resampled real DNA copy number data \cite{Pierre-Jean08092014,matteson2014nonparametric}.
This illustrates the versatility of the kernel-based approach. To be specific this approach allows the detection of changes in the distribution of such complex signals without explicitly modeling the type of change we are looking for.


\bigskip

The remainder of the paper is organized as follows.
In Section~\ref{sec:kernel-framework}, we describe our kernel-based framework and detail the connection between detecting abrupt changes in the distribution and model selection as described in \cite{Arlot2011}. 
A slight generalization of the KCP procedure \citep{Arlot2011} is also derived in Section~\ref{sec.constrained.segmentations} by introducing a new parameter $\ell$ encoding an additional constraint on the minimal length of any candidate segment. This turns out to be particularly useful in low signal-to-noise ratio settings.
The versatility of this kernel-based framework is emphasized in Section~\ref{sec.link.other.approaches} where it is shown how the \ECP approach \citep{matteson2014nonparametric} can be rephrased in terms of kernels.
Our main algorithmic improvements are detailed and justified in Section~\ref{sec:algorithm}. 
We empirically illustrate the improved runtime of our algorithm and compare it to the ones of \ECP and \RBS in Section~\ref{sec.runtimes.comparison}.
In Section~\ref{sec:heuristic} we detail our faster (but approximate) algorithm used to analyze larger profiles ($n>10^5$). It is based on the combination of a low-rank approximation to the Gram matrix and the binary segmentation heuristic \citep{yang12simple}.
An empirical comparison of the runtimes of the exact and approximate algorithms is provided in Section~\ref{sec.runtimes.approx.solution}.
Finally, Section~\ref{sec:eval-segm} illustrates the statistical performance of our kernel-based change-point procedure in comparison with state-of-the-art alternatives in the context of biological signals such as DNA copy numbers and allele B fractions  \citep{lai12change-point}.

\section{Kernel framework}
\label{sec:kernel-framework}

In this section we recall the framework of \cite{harchaoui2007retrospective} where detecting changes in the distribution of a complex signal is rephrased as detecting changes of the mean element of a sequence of points in a Hilbert space. 
Then we detail the so-called KCP (Kernel Change-point Procedure) \citep{Arlot2011}, which has been proved to be optimal in terms of an oracle inequality.

\subsection{Notation}\label{sec:notations}
Let $X_1,X_2,\ldots,X_n \in\mathcal{X}$ be a time-series of $n$ independent random variables, where $\mathcal{X}$ denotes any set assumed to be \emph{separable} \citep{Die_Bac:2014} throughout the paper.
Let $k:\ \mathcal{X} \times \mathcal{X} \to \R$ denote a symmetric positive semi-definite kernel \citep{Aronszajn:1950}, and $\mathcal{H}$ be the associated reproducing kernel Hilbert space (RKHS). We refer to \cite{Ber_Tho:2004} for an extensive presentation about kernels and RKHS.
Let us also introduce the canonical feature map $\Phi :\ \mathcal{X} \to \mathcal{H}$ defined by $\Phi(x) = k(x,\cdot) \in \H $, for every $x\in\mathcal{X}$. This canonical feature map allows to define the inner product on $\H$ from the kernel $k$, by 
\begin{align}\label{eq.kernel.trick}
\forall x, y\in \mathcal{X},\qquad   \scalH{\Phi(x), \Phi(y) } = k(x,y) .
\end{align}

\paragraph{The asset of kernels} 
One main advantage of kernels is to enable dealing with complex data of any type provided a kernel can be defined. In particular no vector space structure is required on $\X$. For instance $\X$ can be a set of DNA sequences, a set of graphs or a set of distributions to name but a few examples (see \citep{Gart_2008} for various instances of $\X$ and related kernels).
Therefore, as long as a kernel $k$ can be defined on $\X$, any element $x\in\X$ is mapped, through the canonical feature map $\Phi$, to an element of the Hilbert space $\H$. This provides a unified way to deal with different types of (simple or complex) data.
Then for every index $1 \leq t\leq n$, let us note 
\begin{eqnarray}  \label{eq:repr-RKHS}
  Y_t = \Phi (X_t) \in \H .
\end{eqnarray}
From now on, we will only consider the following sequence $Y_1,\ldots,Y_n \in \H$ of independent Hilbert-valued random vectors.

\paragraph{The kernel trick} 
As a space of functions from $\X$ to $\R$, the RKHS $\H$ can be infinite dimensional. From a computational perspective one could be worried that manipulating such objects is computationally prohibitive. 
However this is not the case and our algorithm relies on the so-called \emph{kernel trick}, which consists in translating any inner product in $\H$ in terms of the kernel $k$ by use of Eq.~\eqref{eq.kernel.trick}. 
For every $1 \leq i,j \leq n$, it results
\begin{align*}
  \scalH{ Y_i, Y_j } = k(X_i,X_j) = \K_{i,j} ,
\end{align*}
 where $\K_{i,j}$ denotes the $(i,j)$-th coefficient of the $n\times n$ Gram matrix $\K=\croch{ k(X_i,X_j) }_{1\leq i,j\leq n}$.


\subsection{Detecting changes in the distribution using kernels}\label{sec:characteristic}

Let us consider the model introduced by \cite{Arlot2011}, which connects every $Y_t$ to its ``mean'' $\mu^{\star}_t \in \H$ by
\begin{eqnarray}\label{eq.model.RKHS}
\forall \ 1\leq t\leq n, \qquad Y_t = \Phi(X_t) = \mu^{\star}_t+ \epsilon_t \in \H ,
\end{eqnarray}
where $\mu^{\star}_t$ denotes the \emph{mean element} associated with the distribution $\mathds{P}_{X_t}$ of $X_t$, and $\epsilon_t = Y_t - \mu^{\star}_t $.
Let us also recall \citep{Led_Tal:1991} that if $\X$ is separable and $\E\croch{ k(X_t,X_t) }< +\infty$, then $\mu^{\star}_t$ exists and is defined as the unique element in $\H$ such that
\begin{eqnarray}  \label{eq:mean-element}
  \forall \ f \in \mathcal{H}, \quad \scalH{ \mu^{\star}_t, f }  =  \E \scalH{\Phi(X_t), f } . 
\end{eqnarray}

%
For characteristic kernels \citep{SriFuLa_2010}, a change in the distribution of $X_t$ implies a change in the mean element $\mu^{\star}_t$, that is
\begin{eqnarray}  \label{eq:diff-dist-imply-diff-ME}
\forall \ 1\leq i\neq j \leq n ,\qquad   \mathds{P}_{X_i}\neq \mathds{P}_{X_j} \Rightarrow \mu^{\star}_i \neq\mu^{\star}_j ,
\end{eqnarray}
the converse implication being true by definition of $\mu^{\star}_t$ in Eq.~\eqref{eq:mean-element}.
The idea behind kernel change-point detection \citep{Arlot2011} is to translate the problem of detecting changes in the distribution into detecting changes in the mean of Hilbert-valued vectors.
\begin{rk}
	When considering $\X \subset \R^q$ for some integer $q>0$, several classical kernels are characteristic. For instance,
	\begin{itemize}
		\item The Gaussian kernel: $k(x,y) = e^{-\norm{x-y}^2/\delta }$, with $x,y\in \R^q$ and $\delta>0$,
		\item The Laplace kernel: $k(x,y) = e^{- \norm{x-y}/\delta }$, with $x,y\in \R^q$ and $\delta>0$, 
		 \item The exponential kernel: $k(x,y) = e^{-\scal{x,y}_q/\delta}$, with $x,y\in \R^q$ and $\delta>0$,
	\end{itemize}
	where $\norm{\cdot}$ and $\scal{\cdot,\cdot}_q$ respectively denote the usual Euclidean norm and inner product in $\R^q$.
	The energy-based kernel discussed in Section~\ref{sec.link.other.approaches} is also a characteristic kernel (see Lemma~1 in \cite{Matteson_James:2014}).
	However, with more general sets $\X$, building a characteristic kernel is challenging as illustrated by \citep{SriFuLa_2010} and \cite{ChristSteinw_2010}.
\end{rk}
Let us also notice that the procedure developed by \cite{Arlot2011} can be seen as a ``kernelized version'' of the procedure proposed by \cite{Lebarbier2005}, which was originally designed to detect changes in the mean of real-valued variables.

\subsection{Statistical framework}\label{sec:statistical-model}

From Eq.~\eqref{eq:diff-dist-imply-diff-ME} it results that any sequence of abrupt changes in the distribution along the time corresponds to a sequence of $D^*$ true change-points $1=\tau^*_1< \tau^*_2 < \ldots < \tau^*_{D^*} \leq n $ (with $\tau^*_{D^*+1}  = n+1$ by convention) such that
\begin{align*}
  \mu^*_1 = \ldots = \mu^*_{\tau^*_1-1} \neq  \mu^*_{\tau^*_1}=\ldots = \mu^*_{\tau^*_2-1} \neq \ldots \neq \mu^*_{\tau^*_{D^*}} = \ldots = \mu^*_{n} .
\end{align*}
In other words we get that $\mu^* = (\mu^*_1,\ldots,\mu^*_n)^\prime \in\H^n$ is piecewise constant.

From a set of $D$ candidate change-points $ 1 = \tau_1< \ldots < \tau_{D}\leq n$, let $\tau$ be defined by 
\begin{align*} 
\tau = \paren{ \tau_1, \tau_2, \ldots ,\tau_{D-1}, \tau_{D} } ,
\end{align*}
with the convention $\tau_1=1$ and $\tau_{D+1}=n+1$.
With a slight abuse of notation, we also call $\tau$ the segmentation of $\acc{1,\ldots,n}$ associated with the change-points $ 1 = \tau_1< \ldots < \tau_{D}\leq n$.
The estimator $\muh=\paren{\muh_1,\ldots,\muh_n}^\prime\in\H^n$ of $\mu^*=\paren{\mu^*_1,\ldots,\mu^*_n}^\prime$ 
proposed by \cite{Arlot2011} is defined by
\begin{eqnarray*}
\forall \ 1\leq i \leq D,\quad \forall \ t\in\acc{\tau_i,\ldots,\tau_{i+1}-1},\quad  \muh_t = \frac{1}{\tau_{i+1}-\tau_i} \displaystyle \sum_{t^\prime =\tau_i}^{\tau_{i+1}-1} Y_{t^\prime} .
\end{eqnarray*} 
The performance of $\muh$ is measured by the quadratic risk: 
\begin{align*}
\mathcal{R}( \muh) = \E\croch{ \norm{ \mu^* - \muh }^2_{\H,n}} = \E\croch{ \sum_{i=1}^n \normH{ \mu^*_i - \muh_i }^2 } ,
\end{align*}
where $\normH{\cdot}$ denotes the norm in the Hilbert space $\H$.

\subsection{Model selection}\label{sec:model-select}

If the signal-to-noise ratio is small, \cite{Arlot2011} emphasized that all true change-points cannot be recovered without including false change-points.
This leads them to define the best segmentation $\tau^*$ (for a finite sample size) as
\begin{align*}
 \tau^* = \argmin_{ \tau \in \Tn} \norm{ \mu^* - \muh }_{\H,n} ,
\end{align*}
where $\Tn$ denotes the collection of all possible segmentations $\tau$ of $\acc{1, \ldots,n}$ with at most $D_{\max}$ segments.
When the signal-to-noise ratio is large enough, $\tau^*$ coincides with the true segmentation.

As a surrogate to the previous unknown criterion, \cite{Arlot2011} optimize the following penalized criterion
\begin{align} \label{def.penalized.criterion.global}
  \hat \tau = \argmin_{\tau\in\Tn} \acc{ \norm{ Y - \muh }^2_{\H,n} + \mathrm{pen}(\tau) } ,\qquad \mbox{with}\quad  \mathrm{pen}(\tau) = c_1 D_\tau  + c_2 \log{ n-1 \choose D-1 } ,
\end{align}
where $Y=(Y_1,\dots,Y_n)^\top\in\R^n$, $c_1,c_2>0$ are constants to be fixed, and $D_\tau$ denotes the number of segments of the segmentation $\tau$.
Since this penalty only depends on $\tau$ through $D_\tau$, optimizing \eqref{def.penalized.criterion.global} can be formulated as a two-step procedure.
The first step consists in solving:
\begin{align} \label{def.penalized.criterion.global.first.step}
  \forall \ 1\leq D \leq D_{\mathrm{max}},\quad \hat{\tau}_D = \argmin_{\tau \in \T_D} \norm{ Y - \muh }^2_{\H,n} ,
\end{align}
where $\T_D$ denotes the set of segmentations with $D$ segments. This optimization problem, which is usually solved by dynamic programming \cite{Auger_Lawrence:1989,rigaill2012exact}, is computationally hard since the cardinality of $\T_D$ is ${n-1\choose D-1}$.
The second step is to straightforwardly optimize:
\begin{align} \label{def.penalized.criterion.global.second.step}
  \hat D = \argmin_{1\leq D\leq D_{\max}} \acc{ \norm{ Y - \muhD }^2_{\H,n} + \mathrm{pen}(\tauh_D) } \qquad \mbox{and}\quad \tauh = \tauh_{\hat D} .
\end{align}

The right-most term in the penalty \eqref{def.penalized.criterion.global} accounts for the number of candidate segmentations with $D$ segments (see the comments of Theorem~2 in \cite{Arlot2011}). Intuitively this term balances the trend of the estimator \eqref{def.penalized.criterion.global.first.step} to overfit because of the large number of candidate segmentations.
\begin{rk}
		It is important to notice that the above two-step procedure depends on the hyper-parameter $D_{\max}$, which is the maximum number of segments of the candidate segmentations. 
		%
		
		In fact choosing an appropriate $D_{\max}$ is related to the calibration of constants $c_1$ and $c_2$ in the penalty term of \eqref{def.penalized.criterion.global}.
		Since the optimal values of $c_1$ and $c_2$ depend (at least) on the variance of the signal at hand, they have to be calibrated in a data-driven way. Here they have been calibrated by using the so-called \emph{slope heuristic} technique described in \cite{Arlot2011} (see also the numerical experiments in Section~\ref{sec:eval-segm} for more details).
		In particular $D_{\max}$ has to be chosen large enough to make the slope heuristic work well. 
		Given some prior knowledge of an adequate range of values for $D$ taking $D_{max}$ to be 10 to 20 times larger than that seems to work well in practice. Typically for copy number data (see Section~\ref{sec.DNA.compy.number.data}) one rarely expect more than 10 change-points per chromosome and taking $D_{max} \approx 100$ or $200$ often make sense.

	%
	\end{rk}

From a theoretical point of view, this model selection procedure has been proved to be optimal in terms of an oracle inequality by \cite{Arlot2011}. This is the usual non-asymptotic optimality result for model selection procedures \citep{Bir_Mas:2006}. This procedure has also been proved to provide consistent estimates of the change-points \citep{GarreauArlot_2016}.
However, from a computational point of view, the first step (\textit{i.e.} solving Eq.~\eqref{def.penalized.criterion.global.first.step}) remains challenging.
Indeed existing dynamic programming algorithms are time and space consuming when used in the kernel framework as it will be clarified in Section~\ref{sec.dynamic.programming.classical}.
The main purpose of the present paper is to provide a new computationally efficient algorithm to solve Eq.~\eqref{def.penalized.criterion.global.first.step}.
Our new algorithm has a reduced space and time complexity and allows the analysis of signals larger than $n= 10^4$.

\subsection{Low signal-to-noise and minimal length of a segment}\label{sec.constrained.segmentations}

In settings where the signal-to-noise ratio is weak (see for instance Figure~\ref{fig:example-data} where the tumor percentage is low) change-point detection procedures are more likely to put changes in noisy regions. This results in overfitting and meaningless small segments \cite{Arlot:Celisse:2011}.
A common solution is to include a constraint on the minimum length $\ell$ of segments. For instance by default \ECP enforces that the estimated segmentation has segments with at least $\ell = 30$ points \cite{ecp}.

One important side effect of this constraint on $\ell$ is that the total number of candidate segmentations with $D$ segments quickly decreases with $\ell$. 
Therefore the penalty in \eqref{def.penalized.criterion.global} has to be modified.

The following lemma gives the cardinality of this set of segmentations.
\begin{lem}\label{lem.combinatoire}
	Let $\Tn^\ell(D)$ denote the set of segmentations of $\paren{1,\ldots,n}$ in exactly $D\geq 1$ segments such that the length of each segment is at least $\ell \geq 1$.
	Then the cardinality of $\Tn^\ell(D)$ satisfies
	\begin{align*}
	\Card{ \Tn^\ell(D) } = { n-D(\ell-1) -1 \choose D-1} .
	\end{align*}
\end{lem}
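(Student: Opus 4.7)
The plan is to set up a bijection between $\Tn^\ell(D)$ and the set of compositions of an integer with lower-bounded parts, then finish by a standard stars-and-bars count.

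First, I would note that any segmentation $\tau = (\tau_1, \ldots, \tau_D)$ with $\tau_1 = 1$ and the convention $\tau_{D+1} = n+1$ is in one-to-one correspondence with the tuple of segment lengths $(s_1, \ldots, s_D)$ defined by $s_i = \tau_{i+1} - \tau_i$. The constraint that each segment has length at least $\ell$ translates to $s_i \geq \ell$ for all $i$, and the fact that the segments partition $\{1, \ldots, n\}$ gives $s_1 + \cdots + s_D = n$. Thus $\Card{\Tn^\ell(D)}$ equals the number of $D$-tuples of integers $(s_1, \ldots, s_D)$ with $s_i \geq \ell$ summing to $n$.

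Next, I would perform the standard substitution $t_i = s_i - \ell \geq 0$, which turns the counting problem into enumerating $D$-tuples of nonnegative integers $(t_1, \ldots, t_D)$ with $t_1 + \cdots + t_D = n - D\ell$. By the classical stars-and-bars formula, the number of such tuples equals $\binom{(n-D\ell) + D - 1}{D - 1}$.

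Finally, I would simplify the top of the binomial coefficient: $(n - D\ell) + D - 1 = n - D(\ell-1) - 1$, which yields exactly the announced formula $\binom{n - D(\ell-1) - 1}{D-1}$. The argument is essentially a one-line combinatorial identity once the bijection is set up, so there is no real obstacle; the only minor point of care is to confirm that the bijection between segmentations (with $\tau_1 = 1$) and length-tuples summing to $n$ is exact, which follows immediately from the convention $\tau_{D+1} = n+1$.
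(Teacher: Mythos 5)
Your proof is correct and is essentially the paper's argument: the paper removes $\ell-1$ points from the left edge of each segment to biject $\Tn^\ell(D)$ onto the unconstrained segmentations of $\paren{1,\ldots,n-D(\ell-1)}$ into $D$ non-empty segments, which is exactly your shift of the segment lengths followed by the standard count. Your phrasing via compositions and stars-and-bars is merely a more explicit rendering of the same bijection, with the final binomial simplification carried out correctly.
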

Let us notice that if $\ell=1$, one recovers the usual cardinality that is used in the penalty (see Eq.~\eqref{def.penalized.criterion.global}).
As an illustration of the influence of the constraint on $\ell$, let us consider the set-up where $n=100$, $D=10$, and $\ell =10$. Then the size of the unconstrained set of segmentations with $10$ segments $\T_{100}(10) = \T_{100}^1(10)$ is $\Card{ \T_{100}(10) } \approx 1.7 \cdot 10^{12}$, whereas the constrained set $\T_{100}^{10}(10)$ is smaller since its cardinality is equal to 1.

	\begin{proof}[Proof of Lemma~\ref{lem.combinatoire}]
		The proof consists in showing that there is a one-to-one mapping between the set $\Tn^\ell(D)$ of segmentations of $\paren{1,\ldots,n}$ with $D$ segments of length at least $\ell\geq 1$, and the set $\mathcal{S}_n^\ell(D)$ of segmentations of $\paren{1,\ldots,n-D(\ell-1)}$ with $D$ (non-empty) segments.
		
		Let us consider one segmentation $\tau \in \Tn^\ell(D)$. Since each segment of $\tau$ is of length at least $\ell$, let us remove $\ell-1$ points from the left edge of each of the $D$ segments. Then the resulting segmentation belongs to $\mathcal{S}_n^\ell(D)$.
		
		Conversely, take one segmentation $\tau \in \mathcal{S}_n^\ell(D)$.
		Then each segment of $\tau$ contains at least one point. Adding $\ell-1$ points to each segment (from the left edge) clearly provides a segmentation with $D$ segments of length at least $\ell$. This allows to conclude.
	\end{proof}
	
	This leads to the following generalized change-points detection procedure involving a constraint on the minimum length $\ell\geq 1$ of each segment.
	\begin{itemize}
		\item[Step 1: ] Solve
		\begin{align} \label{def.penalized.criterion.global.first.step.constrained}
		\forall \ 1\leq D \leq D_{\mathrm{max}},\quad \hat{\tau}_D^\ell = \argmin_{\tau \in \T_n^\ell(D)} \norm{ Y - \muh }^2_{\H,n} ,
		\end{align}
		where $\T_n^\ell(D)$ denotes the set of segmentations with $D$ segments of length at least $\ell\geq 1$. 
		
		\item[Step 2: ] Find
		\begin{align} \label{def.penalized.criterion.global.second.step.constrained}
		\hat D^\ell = \argmin_{D} \acc{ \norm{ Y - \hat \mu^{\hat{\tau}_D^\ell } }^2_{\H,n} + \mathrm{pen}_\ell(\tauh_D^\ell) } \qquad \mbox{and}\quad \tauh^\ell = \tauh_{\hat D^\ell}^\ell ,
		\end{align}
		where $\mathrm{pen}_\ell(\tau) = c_1 D_{\tau} + c_2 \log{ n-D_\tau(\ell-1) -1 \choose D_\tau-1}  $.
	\end{itemize}

	Let us emphasize that this generalized procedure, including this additional 
parameter $\ell\geq 1$, is very similar the previous one.
The optimization step of Eq.~\eqref{def.penalized.criterion.global.first.step.constrained} is performed by dynamic programming up to a minor change of implementation. The optimization of the second step \eqref{def.penalized.criterion.global.second.step.constrained} remains unchanged except it involves a slightly different penalty shape. The tuning of the constant $c_1$ and $c_2$ is still made by the slope heuristic (see Section~\ref{sec:eval-segm}).

\subsection{A link between kernels and energy-based distances}\label{sec.link.other.approaches}

Note that the kernel-based framework developed in Sections~\ref{sec:notations}--\ref{sec:statistical-model} is very general. 
Various existing procedures can be rephrased in this framework by use of a particular kernel. For example the procedure of \citep{Lebarbier2005}, which is devoted to the detection of changes in the mean of a one-dimensional real-valued signal, reduces to ours by use of the linear kernel. 
More interestingly the procedure called \ECP developed by \citep{matteson2014nonparametric} and that relies on an energy-based distance to detect changes in multivariate distributions, can also be integrated into our framework using a particular kernel as explained in what follows.

For every $\alpha\in(0,2)$, let us define $\rho_{\alpha}(x,y) = \norm{x-y}^{\alpha} $, where $x,y\in\R^q$ and $\norm{\cdot}$ denotes the Euclidean norm on $\R^q$. Then $ \rho_{\alpha}$ is a semimetric of negative type \citep{Berg_Christ_Ressel:1984}, and for any independent random variables $X,X^\prime,Y,Y^\prime\in \R^d$ with respective probability distributions satisfying $P_X=P_{X^\prime}$ and $P_Y=P_{Y^\prime}$, \cite{matteson2014nonparametric} introduce the energy-based distance:
\begin{align}\label{eq.energy.distance}
  \mathcal{E}(X,Y;\alpha) = 2 E\croch{ \rho_{\alpha}(X,Y)} - E\croch{ \rho_{\alpha}(X,X^\prime)} - E\croch{ \rho_{\alpha}(Y,Y^\prime)} ,
\end{align}
with the assumption that $\max\paren{E\croch{ \rho_{\alpha}(X,X^\prime)},E\croch{ \rho_{\alpha}(X,Y)},E\croch{ \rho_{\alpha}(Y,Y^\prime)}}<+\infty$.
Then following \cite{Sej_Sripe_Gretton_Fuku:2013} and for every $x_0 \in \R^d$, we define
\begin{align}\label{eq.energy.based.kernel}
k_{\alpha}^{x_0} (x,y ) = \frac{1}{2}\croch{ \rho_{\alpha}(x,x_0) + \rho_{\alpha}(y,x_0) - \rho_{\alpha}(x,y) } ,
\end{align}
which is a positive semi-definite kernel leading to an RKHS $\mathcal{H}_{\alpha}^0$.
Plugging this in Eq.~\eqref{eq.energy.distance}, one can easily check that
\begin{align*}
  \mathcal{E}(X,Y;\alpha) & = 2 E\croch{ \rho_{\alpha}(X,x_0) + \rho_{\alpha}(Y,x_0) -2 k_{\alpha}^{x_0} (X,Y ) } \\
 & - E\croch{ \rho_{\alpha}(X,x_0) + \rho_{\alpha}(X^\prime,x_0) -2 k_{\alpha}^{x_0} (X,X^\prime ) } \\
& -  E\croch{ \rho_{\alpha}(Y,x_0) + \rho_{\alpha}(Y^\prime,x_0) -2 k_{\alpha}^{x_0} (Y,Y^\prime ) } \\
& = 2 E\croch{ k_{\alpha}^{x_0} (X,X^\prime ) + k_{\alpha}^{x_0} (Y,Y^\prime ) - 2 k_{\alpha}^{x_0} (X,Y )} \\
& = 2 \norm{ \mu_{P_X}^{\alpha} - \mu_{P_Y}^{\alpha} }_{\mathcal{H}_{\alpha}^0}^2 ,
\end{align*}
where $\mu_{P_X}^{\alpha}, \mu_{P_Y}^{\alpha}\in \mathcal{H}_{\alpha}^0$ respectively denote the mean elements of the distributions $P_X$ and $P_Y$, and $ \norm{\cdot}_{\mathcal{H}_{\alpha}^0} $ is the norm in $\mathcal{H}_{\alpha}^0$.

An important consequence of this derivation is that the exact and approximation algorithms described in Section~\ref{sec:algorithm} immediately apply to procedures relying on the optimization of the energy-based distance  $\mathcal{E}(X,Y;\alpha)$. 
This is all the more noticeable as our \emph{exact algorithm} has a lower memory complexity than the \emph{approximate optimization algorithm} of \ECP (with a similar time complexity). 
Therefore, for the same computational time, our exact optimization algorithm could replace the approximate algorithm used in \ECP. 
One can also emphasize that our approximate algorithm which is even faster could also be used (see its description in Section~\ref{sec:heuristic}).
This particular energy-based kernel, which is a characteristic kernel, has been involved in our simulation experiments as well (Section~\ref{sec:kernel-used}).

%

\section{New algorithms}\label{sec:algorithm}

In this section we first show how to avoid the preliminary calculation of the cost matrix required by \cite{harchaoui2007retrospective} to apply dynamic programming. 
The key idea is to compute the elements of the cost matrix on the fly when they are required by the dynamic programming algorithm. 
Roughly, this can be efficiently done by reordering the loops involved in Algorithm~\ref{algo:v1} proposed in \citep{harchaoui2007retrospective}. 
This leads to the new exact Algorithm~\ref{algo:v3}. It has a reduced space complexity of order $\mathcal{O}(n)$ compared to $\mathcal{O}(n^2)$ for the one used in \cite{harchaoui2007retrospective}. 
Note that including the constraint (introduced in Section~\ref{sec.constrained.segmentations}) on the segment sizes  mostly change the index of the \for loops in Algorithm~\ref{algo:v3}. We choose to describe the algorithm in the unconstrained version \eqref{def.penalized.criterion.global.first.step} to ease the understanding.

Second, we provide a faster but approximation algorithm (Section~\ref{sec:heuristic}), which enjoys a smaller complexity of order $\mathcal{O}(D_{\max}n)$ in time.  
It combines a low-rank approximation to the Gram matrix and the use of the binary segmentation heuristic (Section~\ref{sec.binary.segmentation}).
This approximation algorithm allows the analysis of very large signals ($n  \geq 10^6$).

\subsection{New efficient algorithm to recover the best segmentation from the Gram matrix}
\label{sec:exact-algorithm}

As exposed in Section~\ref{sec:model-select}, the main computational cost of the change-point detection procedure results from  Eq.~\eqref{def.penalized.criterion.global.first.step}, that is recovering the best segmentation with $1\leq D\leq \Dmax$ segments and solving
\begin{align} 
   \mathbf{L}_{D,n+1}  & =  \min_{\tau\in\T_{D}} \normHn{ Y-\muh}^2 & \mbox{(best fit to the data)} \nonumber \\ 
   \hat{m}_{D}  & =  \argmin_{\tau\in\T_{D}} \normHn{ Y-\muh }^2 & \mbox{(best segmentation)} \label{eq:opti_prob}
 \end{align} 
for every $1\leq D \leq D_{\max}$, where $\T_{D}$ denotes the collection of segmentations  of $\acc{1, \ldots, n }$ with $D$ segments.
This challenging step involves the use of dynamic programming  \citep{bellman61approximation,auger1989algorithms}, which provides the exact solution to the optimization problem \eqref{eq:opti_prob}.
Let us first provide some details on the usual way dynamic programming is implemented.

\subsubsection{Limitations of the standard dynamic programming algorithm for kernels}\label{sec.dynamic.programming.classical}

Let $\tau$ denote a segmentation in $D$ segments (with the convention that $\tau_1=1$ and $\tau_{D+1}= n+1$).
For any $1\leq d \leq D$, the segment $\{\tau_d, \ldots, \tau_{d+1}-1\}$ of the segmentation $\tau$ has a cost that is equal to
\begin{align} \label{eq.cost.expression.developed}
C_{\tau_d, \tau_{d+1}} = \sum_{ i=\tau_{d}}^{\tau_{d+1}-1} k(X_i, X_i) \ - \ \frac{1}{\tau_{d+1} - \tau_{d}}  \sum_{ i=\tau_{d}}^{\tau_{d+1}-1} \ \sum_{ j=\tau_{d}}^{\tau_{d+1}-1} k(X_i, X_j) .  
\end{align}
Then the cost of the segmentation $\tau$ is given by 
\begin{align*}
\Vert Y-\hat{\mu}^\tau\Vert^2_{\mathcal{H},n} = \sum_{d=1}^D C_{\tau_d, \tau_{d+1}} ,
\end{align*}
which is clearly \emph{segment additive} \citep{harchaoui2007retrospective,Arlot2011}.

Dynamic programming solves \eqref{eq:opti_prob} for all $1\leq D \leq D_{\max}$ by applying the following update rules
\begin{equation}\label{eq:updaterule} 
\forall \ 2\leq D \leq D_{\max},\quad \mathbf{L}_{D,n+1} = \min_{\tau \leq n} \{ \ \mathbf{L}_{D-1,\tau} + C_{\tau, n+1} \ \}, 
\end{equation}
which exploits the property that the optimal segmentation in $D$ segments over $\acc{1,\ldots,n}$ can be computed from optimal ones with $D-1$ segments over $\acc{1,\ldots,\tau}$ ($\tau \leq n$).
Making the key assumption that \emph{the cost matrix $\acc{C_{i,j}}_{1\leq i,j\leq n+1}$ has been stored}, we can compute $\mathbf{L}_{D,n+1}$ with Algorithm~\ref{algo:v1}. 
\begin{algorithm}[H]
\begin{algorithmic}[1]
\FOR{$D=2$ to $D_{\max}$}
\FOR{$\tau'=D$ to $n$}
\STATE $ \mathbf{L}_{D,\tau'+1} = \min_{\tau \leq \tau'} \{ \ \mathbf{L}_{D-1,\tau} + C_{\tau, \tau'+1} \ \}$
\ENDFOR
\ENDFOR
\caption{Basic use of Dynamic Programming}\label{algo:v1}
\end{algorithmic}
\end{algorithm}
This algorithm is used by \cite{harchaoui2007retrospective} and suffers two main limitations. First it assumes that the $C_{\tau,\tau'}$ have been already computed, and does not take into account the computational cost of its calculation. 
Second, it stores all $C_{\tau,\tau'}$ in a  $\O(n^2)$ matrix, 
which is memory expensive. 

A quick inspection of the algorithm reveals that the main step at Line 3 requires $\O(\tau^\prime)$ operations (assuming the $C_{i,j}$s have been already computed). 
Therefore, with the two \for loops we get a complexity of $\O(D_{\max}n^2)$ in time.
Note that without any particular assumption on the kernel $k(\cdot,\cdot)$,  computing $\normHn{ Y-\hat{\mu}^\tau}^2$ for a given segmentation $\tau$ is already of order $\O(n^2)$ in time since it involves summing over a quadratic number of terms of the Gram matrix (see Eq.~\eqref{eq.cost.expression.developed}). Therefore, there is no hope to solve \eqref{eq:opti_prob} exactly in less than quadratic time without additional assumptions on the kernel.

From Eq.~\eqref{eq.cost.expression.developed} let us also remark that computing each $C_{i,j}$ ($1\leq i< j \leq n$) naively requires itself a quadratic number of operations. Computing the whole cost matrix would require a complexity $\O(n^4)$ in time. 
Taking this into account, the dynamic programming step (Line~3 of Algorithm~\ref{algo:v1}) is not the limiting factor and the overall time complexity of Agorithm~\ref{algo:v1} is $\O(n^4)$. 

Finally, let us emphasize that this high computational burden is not specific of detecting change-points with kernels. It is rather representative of most learning procedures based on reproducing kernels and the associated Gram matrix \citep{Bach_2013}.

\subsubsection{Improved use of dynamic programming for kernel methods}\label{sec.improved.dynamic.programming}

\paragraph{Reducing space complexity}

From Algorithm~\ref{algo:v1}, let us first remark that each $C_{\tau,\tau^\prime}$ is used several times along the algorithm. 
%
%
A simple idea to avoid that is to swap the two \for loops in Algorithm~\ref{algo:v1}. 
This leads to the following modified Algorithm~\ref{algo:v2}, where each column $C_{\cdot,\tau^\prime+1}$ of the cost matrix is only used once unlike in Algorithm~\ref{algo:v1}. 
\begin{algorithm}[H]
\begin{algorithmic}[1]
\FOR{$\tau'=2$ to $n$}
\FOR{$D=2$ to $\min(\tau', D_{\max})$}
\STATE $ \mathbf{L}_{D,\tau'+1} = \min_{\tau \leq \tau'} \{ \ \mathbf{L}_{D-1,\tau} + C_{\tau, \tau'+1} \ \}$
\ENDFOR
\ENDFOR
\caption{Improved space complexity}\label{algo:v2}
\end{algorithmic}
\end{algorithm}

Importantly swapping the two \for loop does not change the output of the algorithm and does not induce any additional calculations.
Furthermore, at step $\tau^\prime$ of the first \for loop we do not need the whole $n \times n$ cost matrix to be stored, but only the column $C_{\cdot,\tau^\prime+1}$ of the cost matrix. This column is of size at most $\O(n)$.

Algorithm~\ref{algo:v2} finally requires storing coefficients $\acc{\mathbf{L}_{d,\tau} }_{1\leq d\leq D,\ 2\leq \tau \leq n}$ that are computed along the algorithm as well as successive column vectors $\acc{C_{\cdot,\tau} }_{2\leq \tau\leq n}$ (of size at most $n$) of the cost matrix.
This leads to an overall complexity of $\O(D_{\max}n)$ in space.
The only remaining problem is to compute these successive column vectors efficiently.
Let us recall that a naive implementation is prohibitive: each coefficient of the column vector can be computed in $\O(n^2)$, which would lead to $\O(n^3)$ to get the entire column.

\paragraph{Iterative computation of the columns of the cost matrix}

The last ingredient of our final exact algorithm is the efficient computation of each column vector $\acc{C_{\cdot,\tau} }_{2\leq \tau\leq n}$. Let us explain how to iteratively compute each vector in linear time.

First it can be easily observed that Eq.~\eqref{eq.cost.expression.developed} can be rephrased as follows
\begin{align*}
  C_{\tau,\tau^\prime} = \sum_{i=\tau }^{\tau^\prime-1} \paren{ k\paren{X_i, X_i} - \frac{A_{i,\tau^\prime}}{ \tau^\prime - \tau } } = D_{\tau,\tau^\prime} - \frac{1}{ \tau^\prime - \tau }  \sum_{i=\tau }^{\tau^\prime-1} A_{i,\tau^\prime},
\end{align*}
where $D_{\tau,\tau^\prime} =  \sum_{i=\tau }^{\tau^\prime-1} k\paren{X_i, X_i}$ and
\begin{align*}
  A_{i, \tau'} = - k(X_i, X_i) + 2 \sum_{j=i}^{\tau' -1} k(X_{i}, X_{j}) .
\end{align*}
Second, both $D_{\tau,\tau^\prime} $ and $\acc{A_{i,\tau^\prime}}_{i\leq \tau^\prime}$ can be iteratively computed from $\tau^\prime$ to $\tau^\prime+1$ by use of the two following equations:
\begin{align*}
  D_{\tau,\tau^\prime+1}  = D_{\tau,\tau^\prime} + k(X_{\tau^\prime},X_{\tau^\prime}) , \qquad \mbox{and}\qquad
  A_{i, \tau'+1} =   A_{i, \tau'} + 2 k(X_{\tau^\prime},X_{\tau^\prime}),\ \forall i\leq \tau' ,
\end{align*}
with $A_{\tau'+1,\tau'+1} = -k(X_{\tau^\prime+1},X_{\tau^\prime+1})$.
Therefore, as long as computing $k(x_i, x_j)$ requires $\O(1)$ operations, updating from $\tau'$ to $\tau'+1$ requires $\O\paren{\tau^\prime}$ operations.
\begin{rk}
		Note that for many classical kernels, computing $k(x_i, x_j)$ is indeed $\O(1)$ in time. 
		If $x_i \in \R^q$ with $q$ a positive integer being negligible with respect to other influential quantities such as $D_{\max}$ and $n$, several kernels such as the Gaussian, Laplace, or $\chi^2$ ones lead to a $\O(q) = \O(1)$ time complexity for evaluating $k(x_i, x_j)$. 
		By contrast in case where $q$ is no longer negligible, the resulting time complexity is multiplied by a factor $q$, which corroborates the intuition that the computational complexity increases with the ``complexity'' of the objects in $\X$.		
\end{rk}

This update rule leads us to the following Algorithm~\ref{algo:v3}, where each column $C_{\cdot,\tau'+1}$ in the first \for loop is computed only once:
\begin{algorithm}[H]
\begin{algorithmic}[1]
\FOR{$\tau^\prime=2$ to $n$}
\STATE Compute the ($\tau^\prime+1$)-th column $C_{\cdot,\tau^\prime+1}$ from  $C_{\cdot,\tau^\prime}$
\vspace{0.3cm}
\FOR{$D=2$ to $\min(\tau^\prime, D_{\max})$}
\STATE $ \mathbf{L}_{D,\tau^\prime+1} = \min_{ \tau \leq \tau^\prime} \{ \mathbf{L}_{D-1,\tau} + C_{\tau, \tau^\prime+1}  \}$
\ENDFOR
\vspace{0.3cm}
\ENDFOR
\caption{Improved space and time complexity (\Kern)}\label{algo:v3}
\end{algorithmic}
\end{algorithm}

From a computational point of view, each step of the first \for loop in Algorithm~\ref{algo:v3} requires $\O(\tau')$ operations to compute $C_{\cdot,\tau^\prime+1}$ and at most $\O(D_{\max}\tau')$ additional operations to perform the dynamic programming step at Line~4.
Then the overall complexity is $\O(D_{\max}n^2)$ in time and $\O(D_{\max}n)$ in space. 
This should be compared to the $\O(D_{\max}n^4)$ time complexity of the naive calculation
of the cost matrix and to the $\O(n^2)$ space complexity of the standard Algorithm~\ref{algo:v1} from \cite{harchaoui2007retrospective}.


\subsubsection{Runtimes comparison to other implementations}

\label{sec.runtimes.comparison}

The purpose of the present section is to perform the comparison between Algorithm~\ref{algo:v3} and other competitors to illustrate their performances as the sample size increases with $D_{\mathrm{max}}=100$.

The first comparison has been carried out between Algorithm~\ref{algo:v3} and  the naive quartic computation of the cost matrix (Algorithm~\ref{algo:v1}). These two algorithms have been implemented in C and packaged in R.
Results for these algorithms are reported in Figure~\ref{fig:time1} (Left). 
Unsurprisingly, our quadratic algorithm (called \Kern) is faster than a quartic computation of the cost matrix (called KCP) even for very small sample sizes ($n < 320$).

Second, we also compared the runtime of \Kern (Algorithm~\ref{algo:v3}) with that of \ECP discussed in Section~\ref{sec.link.other.approaches} implemented in the R-package \cite{ecp} (see the middle panel of Figure~\ref{fig:time1}).
Since \ECP is based on the binary segmentation heuristic applied to an energy-based distance, its worst-case complexity is at most $\O(D_{\mathrm{max}}n^2)$ in time, which is the same as that of \Kern.
Note also that the native implementation of \ECP involves an additional procedure relying on permutations to choose the number of change-points. 
If $B$ denotes the number of permutations, the induced complexity is then $\O(B D_{\mathrm{max}}n^2)$ in time. 
To be fair, we compared our approach and \ECP with and without the permutation layer. 
Finally it is also necessary to emphasize that unlike \Kern, \ECP does not provide the exact but only an approximate solution to the optimization problem \eqref{eq:opti_prob}.
Results are summarized in Figure~\ref{fig:time1} (Middle). 
It illustrates that our exact algorithm (\Kern) has a quadratic complexity similar to that of \ECP with and without permutations. Our algorithm is the overall fastest one even for small sample size ($n<1\,000$). Although this probably results from implementation differences, it is still noteworthy since \Kern is exact unlike \ECP.

Finally, Figure~\ref{fig:time1}~(Right) illustrates the worse memory use of \ECP (with and without any permutations) as compared to that of the exact \KernSegGauss (\Kern used with the Gaussian kernel). \ECP has an $\O(n^2)$ space complexity, while \KernSegGauss is $\O(n)$. For $n$ larger than $10^4$ the quadratic space complexity of \ECP is a clear limitation since several Gb of RAM are required.

\begin{figure}
\begin{center}
	\hspace*{-1.2cm}
\begin{tabular}{ccc}
KernSeg and KCP & KernSeg and ECP (Time) & KernSeg and ECP (Memory)\\
\includegraphics[scale=0.32, clip=true, trim= 0cm 0cm 1cm 2cm]{./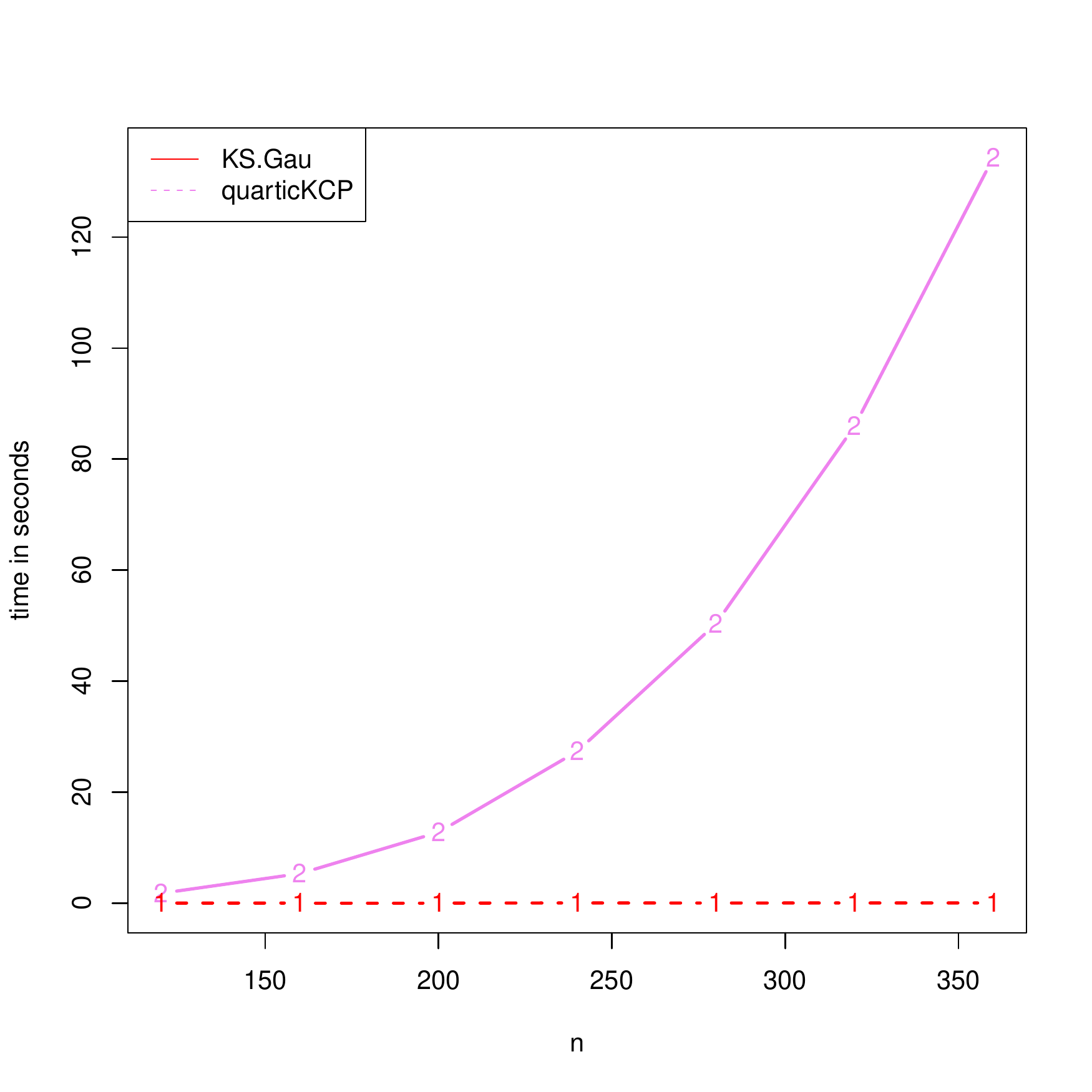} &
\includegraphics[scale=0.32, clip=true, trim= 0cm 0cm 1cm 2cm]{./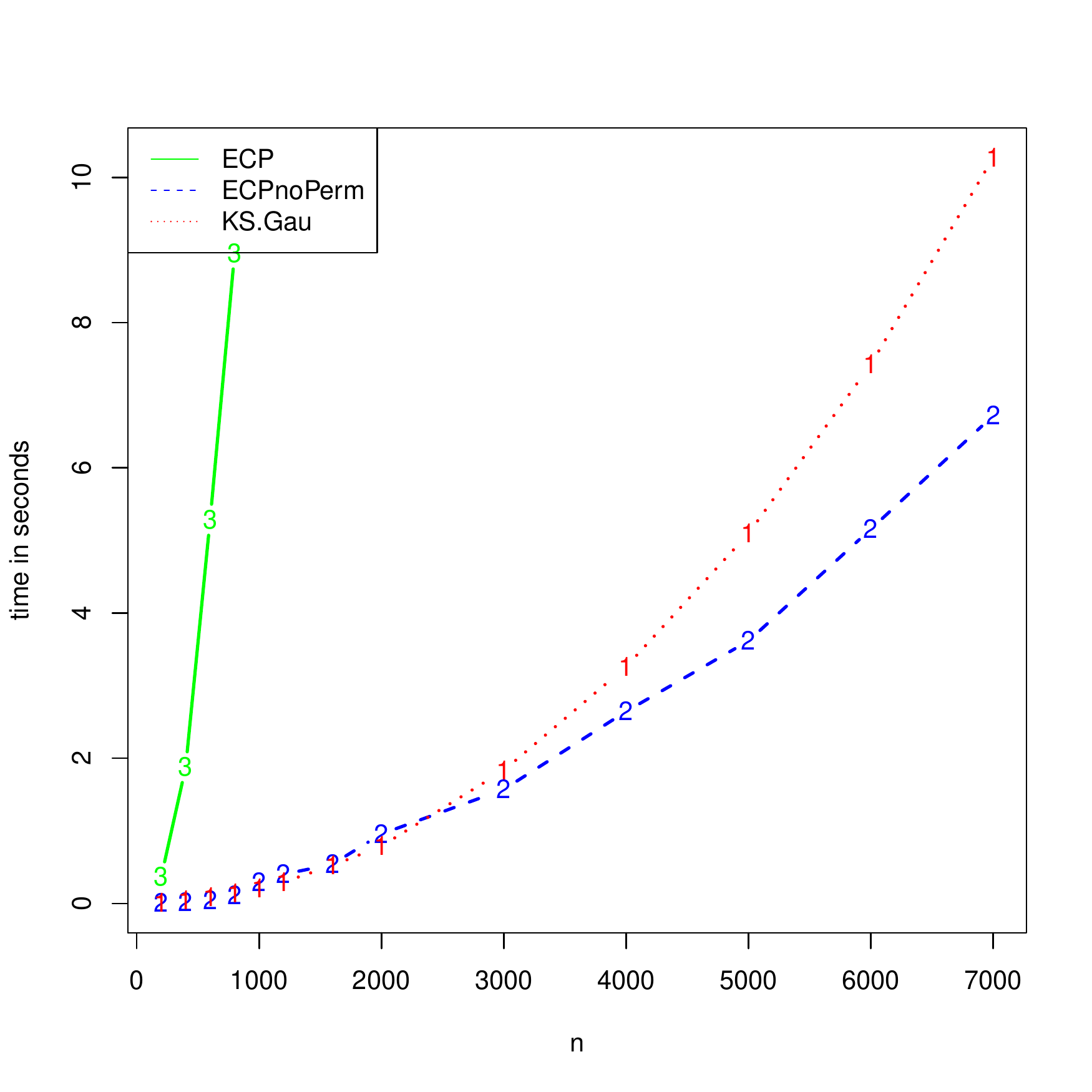} &
\includegraphics[scale=0.32, clip=true, trim= 0cm 0cm 1cm 2cm]{./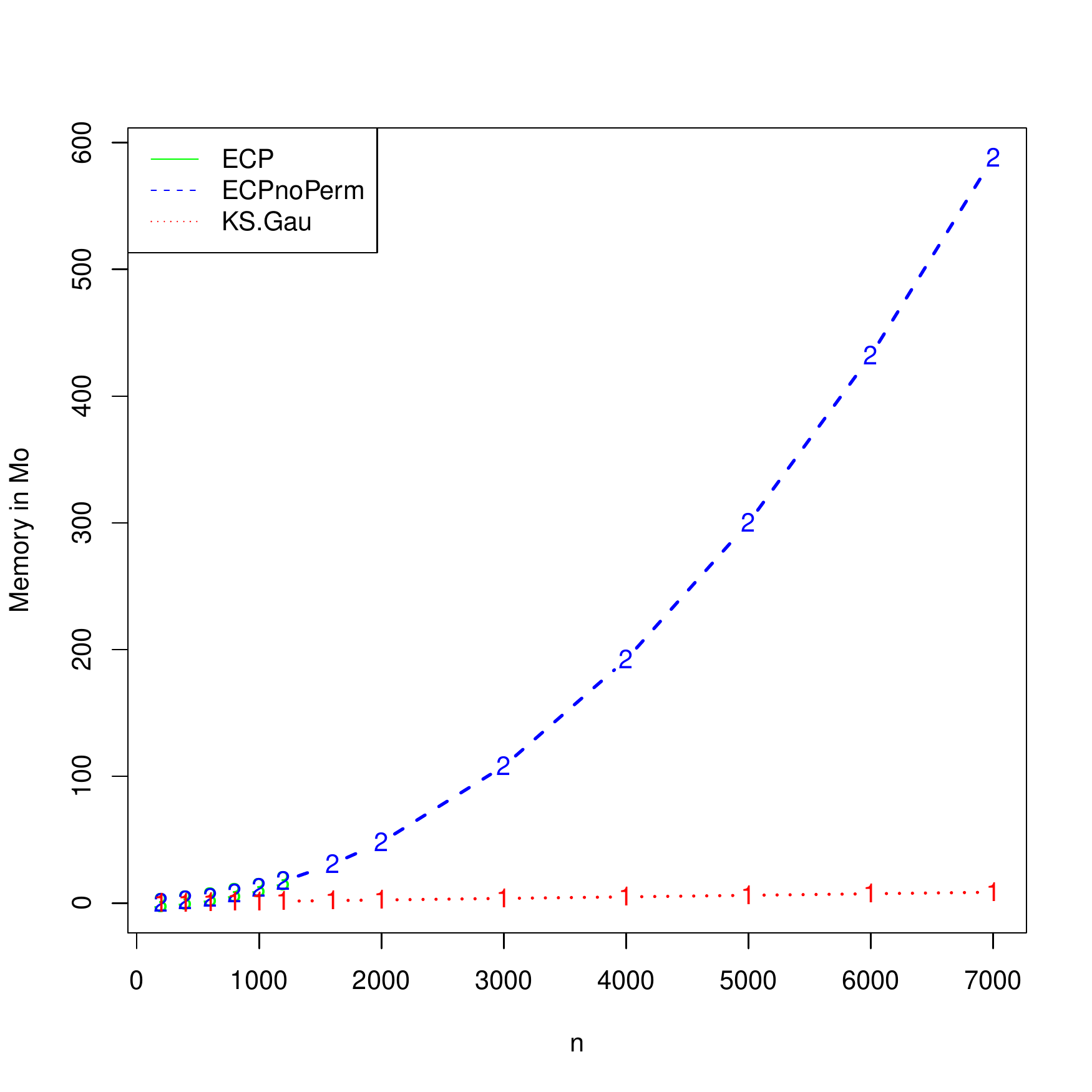} 
\end{tabular}
\end{center}
\caption{(Left) Runtime in seconds of Algorithm~\ref{algo:v3} as a function of the length of the signal ($n$) for $D_{\mathrm{max}}=100$. (1-red) and a quartic computation of the cost matrix (2-violet).
(Middle) Runtime in seconds of Algorithm~\ref{algo:v3} as a function of the length of the signal (1-red) and of \ECP without permutation (2-blue) and \ECP with the default number of permutations (3-green). (Right) Memory in mega-octet of Algorithm~\ref{algo:v3} as a function of the length of the signal (1-red) and of \ECP without permutation (2-blue) and \ECP with the default number of permutations (3-green). The performances of \ECP with or without permutation are exactly the same.}\label{fig:time1}
\end{figure}

\subsection{Approximating the Gram matrix to speed up the algorithm}\label{sec:heuristic}

In Section~\ref{sec.improved.dynamic.programming}, we described an improved algorithm called \Kern, which carefully combines dynamic programming with the computation of the cost matrix elements. This new algorithm (Algorithm~\ref{algo:v3}) provides the exact solution to the optimization problem given by Eq.~\eqref{eq:opti_prob}.
However without any further assumption on the underlying reproducing kernel, this algorithm only achieves the complexity $\O(n^2)$ in time, which is a clear limitation with large scale signals ($n \geq 10^5$). 
Note also that this limitation results from the use of general positive semi-definite kernels (and related Gram matrices) and cannot be improved by existing algorithms to the best of our knowledge. 
For instance, the binary segmentation heuristic \citep{Fryzlewicz2012}, which is known to be computationally efficient for parametric models, suffers the same $\O(n^2)$ time complexity when used in the reproducing kernel framework (see Section~\ref{sec.binary.segmentation}).

Let us remark however that for some particular kernels it is possible to reduce this time complexity. For example for the linear one $k(x,y) = \scal{x , y}_{\R^d},\ x,y\in\R^d$, one can use the following trick
\begin{align}\label{eq.computational.trick}
\sum_{1\leq i\neq j \leq n} k(X_i,X_j) & = \sum_{1\leq i\leq n}\scal{ X_i ,  \sum_{j=1}^n X_j - X_i }   =  \norm{ \sum_{i=1}^n X_i }^2 - \sum_{i=1}^n \norm{ X_i}^2 ,
\end{align}
where $\norm{\cdot}$ denotes the Euclidean norm in $\R^d$.

The purpose of the present section is to describe a versatile strategy (\textit{i.e} applicable to any kernel) relying on a low-rank approximation to the Gram matrix \citep{Williams01usingthe,Smola_2000,Fine01efficientsvm}. 
This approximation allows to considerably reduce the computation time by exploiting \eqref{eq.computational.trick}.
Note however that the resulting procedure achieves this lower time complexity at the price of only providing an approximation to the exact solution to \eqref{eq:opti_prob} (unlike the algorithm described in Section~\ref{sec.improved.dynamic.programming}).

\subsubsection{Low-rank approximation to the Gram matrix} \label{sec.low.rank.approx}

The main idea is to follow the same strategy as the one described by \cite{DriMah_2005} to derive a low-rank approximation to the Gram matrix $\K = \acc{\K_{i,j}}_{1\leq i,j\leq n}$, where $\K_{i,j} = k(X_i, X_j)$.

Assuming $\K$ has rank $\mathrm{rk}(\K) \ll n$, we could be tempted to compute the best rank approximation to $\K$ by computing the $\mathrm{rk}(\K)$ largest eigenvalues (and corresponding eigenvectors) of $\K$.
However such computations induce a $\O(n^3)$ time complexity which is prohibitive.

Instead, \cite{DriMah_2005} suggest applying this idea on a square sub-matrix of $\K$ with size $p \ll n$.
For any subsets $I,J \subset \acc{1, \ldots, n }$, let $\K_{I, J}$ denote the sub-Gram matrix with respectively row and column indices in $I$ and $J$.
Let $J_p \subset \acc{1, \ldots, n }$ denote such a subset with cardinality $p$, and consider the sub-Gram matrix $ \K_{J_p,J_p}$ which is of rank $r \leq p $.
Further assuming $r=p$, the best rank $p$ approximation to $\K_{J_p,J_p}$ is $\K_{J_p,J_p}$ itself.
This leads to the final approximation to the Gram Matrix $\K$ \citep{DriMah_2005,Bach_2013} by
\begin{align*}
 \widetilde{\K} = \K_{I_n, J_p} \ \K^+_{J_p, J_p} \ \K_{J_p, I_n}, 
\end{align*}
where $I_n = \acc{1,\ldots, n }$, and $\K^+_{J_p, J_p}$ denotes the pseudo-inverse of $\K_{J_p, J_p}$.
Further considering the SVD decomposition of $ \K_{J_p, J_p} = \mathbf{U}' \Lambda \mathbf{U}$, for an orthonormal matrix $\mathbf{U}$, we can rewrite 
\begin{align*} \widetilde{\K} =  \Z' \Z, \qquad \mbox{with}  \quad  \Z =   \Lambda^{-1/2} \mathbf{U} \ \K_{J_p, I_n}  \in \mathcal{M}_{p,n}(\R) . \hfill
\end{align*}
Note that the resulting time complexity is $\O(p^2n)$, which is smaller than the former $\O(n^3)$ as long as $p=o(\sqrt{n})$.
This way, columns $\acc{Z_i}_{1\leq i\leq n}$ of $\Z$ act as new $p$-dimensional observations, and each $\widetilde{\K}_{i,j}$ can be seen as the inner product between two vectors of $\R^p$, that is
\begin{align} \label{eq.finite.dimensional.vectors}
\widetilde{\K}_{i,j} =  Z_i' Z_j .
\end{align}
The main interest of this approximation is that, using Eq.~\eqref{eq.computational.trick}, computing the cost of a segment of length $t$ has a complexity $\O(t)$ in time unlike the usual $\O(t^2)$ that holds with general kernels.
%

%
Interestingly such an approximation to the Gram matrix can be also built from a set of deterministic points in $\X$. This remark has been exploited to compute our low-rank approximation for instance in the simulation experiments as explained in Section~\ref{sec:approx}.

Note that choosing the set $J_p$ of columns/rows leading to the approximation $\widetilde \K$ is of great interest in itself for at least two reasons. 
First from a computational point of view, the $p$ columns have to be selected following a process that does not require to compute the $n$ possible columns beforehand (which would induce an $O(n^2)$ time complexity otherwise). 
Second, the quality of $\widetilde \K$ to approximate $\K$ crucially depends on the rank of $\widetilde \K$ that has to be as close as possible to that of $\K$, which remains unknown for computational reasons.
However such questions are out of scope of the present paper, and we refer interested readers to \cite{Williams01usingthe,DriMah_2005,Bach_2013} where this point has been extensively discussed.

\subsubsection{Binary segmentation heuristic}\label{sec.binary.segmentation}

Since the low-rank approximation to the Gram matrix detailed in Section~\ref{sec.low.rank.approx} leads to finite dimensional vectors in $\R^p$ \eqref{eq.finite.dimensional.vectors}, the change-point detection problem described in Section~\ref{sec:statistical-model} amounts to recover abrupt changes of the mean of a $p$-dimensional time-series.
Therefore any existing algorithm usually used to solve this problem in the $p$-dimensional framework can be applied.
An exhaustive review of such algorithms is out of the scope of the present paper. 
However we will mention only a few of them to highlight their drawbacks and motivate our choice.
Let us also recall that our purpose is to provide an efficient algorithm allowing: $(i)$ to (approximately) solve Eq.~\eqref{eq:opti_prob} for each $1\leq D\leq D_{\max}$ and $(ii)$ to deal with large sample sizes ($n\geq 10^6$).

\medskip

The first algorithm is the usual version of constrained dynamic programming \citep{auger1989algorithms}. Although it has been recently revisited with $p=1$ by \cite{rigaill2015pruned,cleynen2014segmentor3isback,maidstone2017optimal}, it has a $\O(n^2)$ time complexity with $p>1$ , which excludes dealing with large sample sizes.
Another version of regularized dynamic programming has been explored by \cite{killick2012optimal} who designed the PELT procedure. It provides the best segmentation over all segmentations with a penalty of $\lambda$ per change-point with an $\O(n)$ complexity in time if the number of change-points is linear in $n$. 
Importantly, the complexity of the pruning inside PELT depends on the true number of change-points. For only a few change-points, the PELT complexity remains quadratic in time.
With PELT, it is not straightforward to efficiently solve Eq.~\eqref{eq:opti_prob} for each $1\leq D\leq D_{\max}$, which is precisely the goal we pursue. 
Note however that it would still be possible to recover some of those segmentations by exploring a range of $\lambda$ values like in CROPS \citep{haynes2017computationally}.

A second possible algorithm is the so-called \emph{binary segmentation} \citep{olshen04circular,yang12simple,Fryzlewicz2012} that is a standard heuristic for approximately solving Eq.~\eqref{eq:opti_prob} for each $1\leq D\leq D_{\max}$.
This iterative algorithm computes the new segmentation $\widetilde{\tau}\paren{D+1}$ with $D+1$ segments from $\widetilde{\tau}\paren{D}$ by splitting one segment of $\widetilde{\tau}\paren{D}$ into two new ones without modifying other segments.
More precisely considering the set of change-points $\widetilde{\tau}\paren{D} = \acc{\tau_1, \ldots, \tau_{D+1} }$, binary segmentation provides
\begin{align*} 
\widetilde{\tau}\paren{D+1}  & =  \argmin_{\tau \in \mathcal{T}_{D+1} | \tau \cap \widetilde{\tau}\paren{D} = \widetilde{\tau}\paren{D} } \acc{ \Vert Y-\hat{\mu}^\tau\Vert^2_{\mathcal{H},n} } .\hfill 
 \end{align*} 
Since only one segment of the previous segmentation is divided into two new segments at each step, the binary segmentation algorithm provides a simple (but only approximate) solution to Eq.~\eqref{eq:opti_prob} for each $1 \leq D \leq D_{\max}$.

We provide some pseudo-code for binary segmentation in Algorithm \ref{heur:v2}. 
It uses a sub-routine described by Algorithm \ref{heur:v1} to compute the best split of any segment $[\tau, \tau'[$ of the data. To be specific, this BestSplit routine outputs
four things: (1) the reduction in cost of spliting the segment $[\tau, \tau'[$, (2) the best change to split (3) the resulting left segment and (4) the resulting right segment. 

In the binary segmentation algorithm candidate splits are stored and handled using a binary heap data structure \cite{cormen2009introduction} using the reduction in cost as a key.
This data structure allows to efficiently
insert new splits and extract the best split in $\O(\log(D_{max}))$ at every time step.
Without such a structure inserting splits and extracting the best split would typically be in $\O(D_{max})$ and for large
$D_{max}$ the binary segmentation heuristic is at best $\O(n^2)$.
Note that the \RBS procedure \cite{Pierre-Jean08092014}, which is involved in our simulation experiments (Section~\ref{sec:rbs-description}), also uses this heuristic.

\begin{algorithm}[H]
	\begin{algorithmic}[1]
		\STATE $\hat{m} = \underset{ \tau < t < \tau'}{\min} \{ C_{\tau, t} + C_{t, \tau'} \}$ and $\hat{t} = \underset{ \tau < t < \tau'}{\arg \min} \{ C_{\tau, t} + C_{t, \tau'} \}$
		\STATE Output four things  (1) $C_{\tau, t} - \hat{m}$, (2) $\hat{t} \ $, (3) $[\tau, \hat{t}[$ and (4) $[\hat{t}, \tau'[$
		\caption{BestSplit of segment $[\tau, \tau'[$}\label{heur:v1}
	\end{algorithmic}	
\end{algorithm}
\begin{algorithm}[H]
	
	\begin{algorithmic}[1]
		\STATE Segs $= \{ [1, n+1[ \}$
		\STATE Changes $= \emptyset$ 
		\STATE CandidateSplit $= \emptyset$ [a binary heap]
		\FOR{ $D_{max}$ iteration}
		\FOR {$ aseg \in Segs $}
		\STATE Insert BestSplit($aseg$) in CandidateSplit
		\ENDFOR
		\STATE Extract the best split of CandidateSplit and recover: $\hat{t}$, $[\tau, \hat{t}[$ and $=[\hat{t}, \tau'[$
		\STATE Add $ \hat{t}$ in Changes
		\STATE Set Segs to  $\{ \ [\tau, \hat{t}[, [\hat{t}, \tau'[ \ \}$
		\ENDFOR
		\caption{Binary Segmentation}\label{heur:v2}
	\end{algorithmic}
\end{algorithm}

Assuming the best split of any segment is linear in its length the overall time complexity of binary segmentation for recovering approximate solutions to \eqref{eq:opti_prob} for all $1 \leq D \leq D_{\max}$ is around $\O\paren{ \log(D_{\max}) n}$ in practice. 
The worst case time complexity is $\O\paren{ D_{\max} n}$. A typical setting where it is achieved is with the linear kernel when $i \mapsto X_i=\exp(i)$ for instance. At the $i$-th iteration of the binary segmentation algorithm, the best split of a segment of length $n-i+1$ corresponds to one segment of length $1$ and another one of length $n-i$.

An important remark is that  binary segmentation only achieves this reduced $\O\paren{ \log(D_{\max}) n}$ time complexity provided that recovering the best split of any segment is linear in its length. This is precisely what has been allowed by the low-rank matrix approximation summarized by Eq.~\eqref{eq.finite.dimensional.vectors}. 
Indeed with the low-rank approximation, computing the best split of any segment is linear in $n$ and $p$. 
The resulting time complexity of binary segmentation is thus $\O\paren{ p\log(D_{\max}) n}$, which reduces to $\O\paren{ \log(D_{\max}) n}$ as long as $p$ is small compared to $n$.
By contrast without the approximation recovering the best split is typically quadratic in the length of the segment and binary segmentation would suffer an overall time complexity of order $\O(\log(D_{\max})n^2)$ or $\O(D_{\max}n^2)$.

\subsubsection{Implementation and runtimes of the approximate solution}
\label{sec.runtimes.approx.solution}

The approximation algorithm we recommend is the combination of the low-rank approximation step detailed in Section~\ref{sec.low.rank.approx} and of the binary segmentation discussed in Section~\ref{sec.binary.segmentation}.
The resulting time complexity is then $\O( p^2n + p \log(D_{\max}) n) $, which allows dealing with large sample sizes $(n\geq 10^6)$.

From this time complexity it arises that
an influential parameter is the number $p$ of columns of the matrix used to build the low-rank approximation.
In particular this low-rank approximation remains computationally attractive as long as $p=o(\sqrt{n})$.
Figure~\ref{fig:time2} illustrates the actual time complexity of this fast algorithm (implemented in C) with respect to $n$ for various values of $p$: $(i)$ a constant value of $p$ and $(ii)$  $p=\sqrt{n}$. 
To ease the comparison, we also plotted the runtime of the exact algorithm (Algorithm~\ref{algo:v3}) detailed in Section~\ref{sec.improved.dynamic.programming} and \RBS that uses binary segmentation (see Section~\ref{sec:rbs-description}).

Our fast approximation algorithm (\aKern) recovers a quadratic complexity if $p=\sqrt{n}$. However its overhead is much smaller than that of the exact algorithm, which makes it more applicable than the latter with large signals in practice. 
Note also that Figure~\ref{fig:time2} illustrates that \aKern returns the solution  in a matter of seconds with a sample size of $n= 10^5$, which is much faster than \Kern (based on dynamic programming) that requires a few minutes.
The \RBS implementation involves preliminary calculations which make it slower than \aKern with $n \leq 2\cdot 10^3$. However for larger values ($n\geq 10^4$) \RBS is as fast as \aKern with $p=10$.

\begin{figure}
\begin{center}
\begin{tabular}{c}
KernSeg Exact and Heuristic\\
\includegraphics[scale=0.4]{./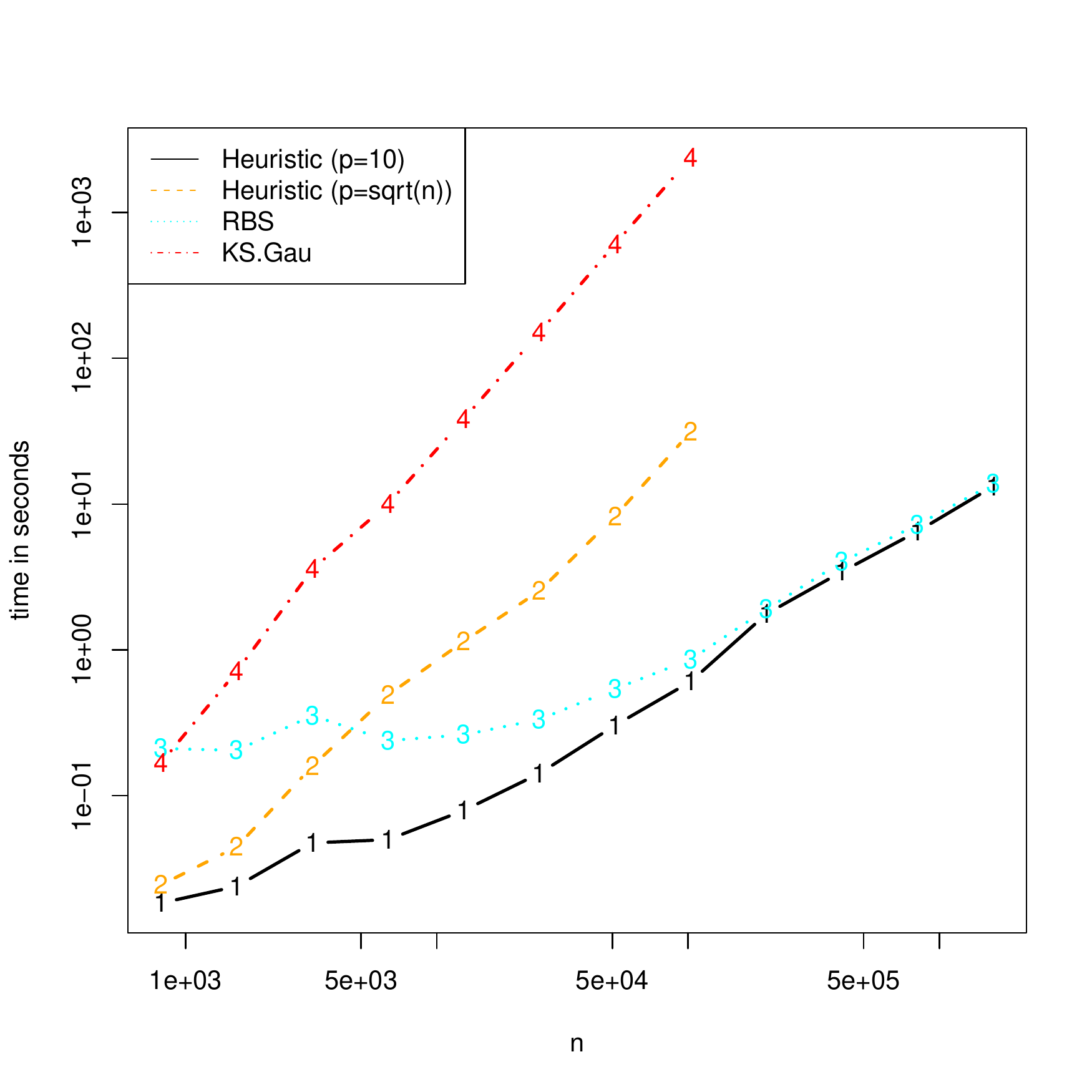}
\end{tabular}
\end{center}
\caption{Runtime as a function of $n$ (length of the signal) for $D_{max}=100$.
	Runtime of our approximation algorithm with $p=100$ (1-black) and $p=\sqrt{n}$ (2-orange), \RBS (3-cyan), exact Algorithm~\ref{algo:v3} (4-red)}\label{fig:time2}
\end{figure}


\section{Segmentation assessment}
\label{sec:eval-segm}

From a statistical point of view \Kern provides the same performance as that of \cite{Arlot2011}. However it greatly improves on the latter in terms of computational complexity as proved in Section~\ref{sec:exact-algorithm}.
In their simulation experiments \cite{Arlot2011} mainly focus on detecting change-points in the distribution of $\R$-valued data as well as of more structured objects such as histograms.
Here we rather investigate the performance of the kernel change-point procedure on specific two-dimensional biological data: the DNA copy number and the BAF profiles (see Section~\ref{sec.DNA.compy.number.data}).
More precisely our experiments highlight two main assets of applying reproducing kernels to these biological data: $(i)$ reasonable kernels avoid the need for modeling the type of change-points we are interested in and improve upon state-of-the-art approaches in this biological context, and $(ii)$ the high flexibility of kernels facilitates data fusion, that is allows to combine different data-types and get more power to detect true change-points.

\medskip

In the following we first briefly introduce the type of data we are looking at, and describe our simulation experiments obtained by resampling from a set of real annotated DNA profiles.
Second, we provide details about the change-point procedures involved in our comparison.
We also define the criteria used to assess the performance of the estimated segmentations.
Finally, we report and discuss the results of these experiments.

\subsection{Data description}

\subsubsection{DNA copy number data} \label{sec.DNA.compy.number.data}
DNA copy number alterations are a hallmark of cancer cells \cite{hanahan11hallmarks}. The accurate detection and interpretation of such changes are two important steps toward improved diagnosis and treatment.  
%
Normal cells have two copies of DNA, inherited from each biological parent of the individual. In tumor cells,  parts of a chromosome of various sizes (from kilobases to a chromosome arm) can be deleted, or copied several times.  As a result, DNA copy numbers in tumor cells are piecewise constant along the genome. 
Copy numbers can be measured using microarray or sequencing experiments. Figure~\ref{fig:copy-number-data-c-b} displays an example of copy number profiles that can be obtained from SNP-array data \citep{neuvial11statistical}. 

The left panel (denoted by TCN) represents estimates of the total copy number (TCN). The right panel (denoted by BAF) represents estimates of allele B fractions (BAF) using only homozygous position.
We refer to \cite{neuvial11statistical} for an explanation of how these estimates are obtained. 
In the normal region [0-2200], TCN is centered around two copies and BAF has three modes at 0, 1/2 and 1.

On top of Figure~\ref{fig:copy-number-data-c-b}, numbers $(a,b)$ represent each parental copy number in the corresponding segment. For instance $(0,2)$ means that the total number of copies in the segment is 2. But a copy from one of the two parents is missing while the other copy has been duplicated.
%
%
\begin{figure}[!h]
  \centering
  \includegraphics[width=8cm]{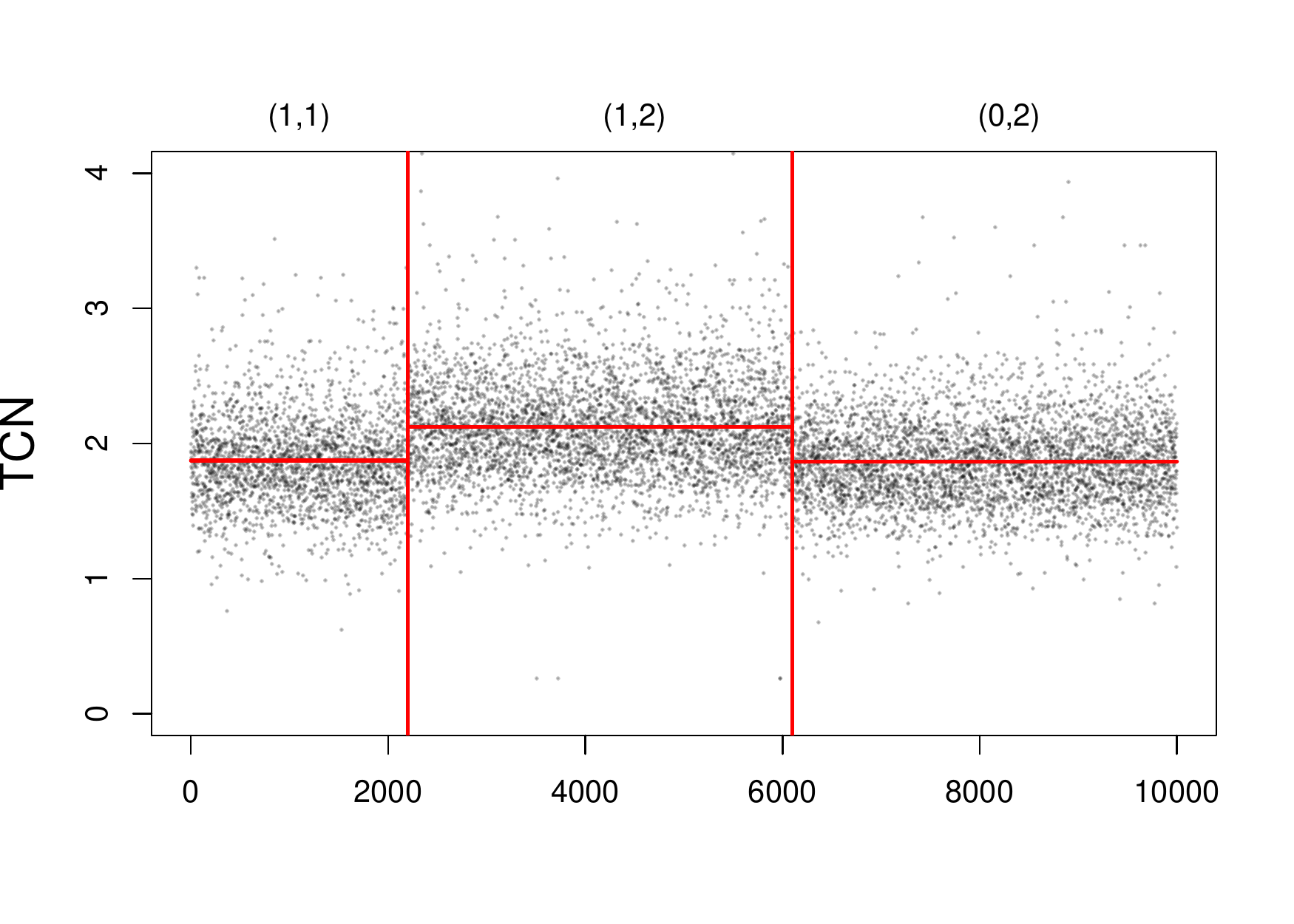}
  \includegraphics[width=8cm]{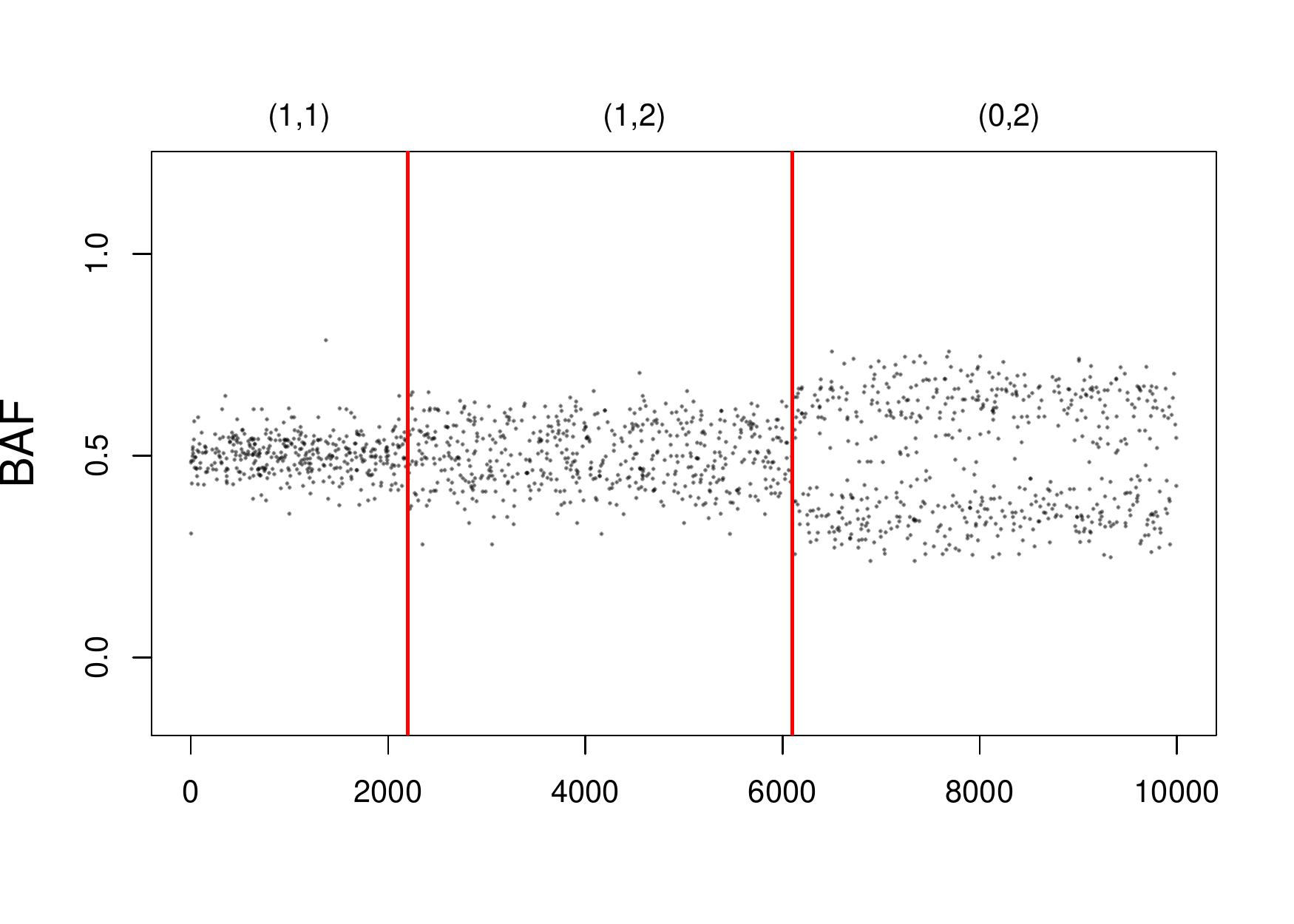}
\caption{SNP array data. Total copy numbers (TCN), allelic ratios (BAF) along 10,000 genomic loci. Red vertical lines represent change-points, and red horizontal lines represent estimated mean signal levels between two change-points. 
}
\label{fig:copy-number-data-c-b}
\end{figure}
Importantly any change in only one of the parental copy numbers is reflected in both TCN and BAF.
Therefore it makes sense to jointly analyze both dimensions to ease the identification of change-points.

Importantly in the following, allelic ratios (BAF) are always symmetrized (or folded)-- that is we consider $|BAF - 0.5|$--to facilitate the segmentation task. This is common practice in the field~\cite{staaf08segmentation}.

\subsubsection{Generated data}
\label{sec:simulation-data}
Realistic DNA profiles with known truth (similar to that of Figure~\ref{fig:copy-number-data-c-b}) have been generated using the \code{acnr} package \citep{Pierre-Jean08092014}.
The constituted benchmark consists of profiles with $5,000$ positions of heterozygous SNPs and exactly $K=10$ change-points.
As in \cite{Pierre-Jean08092014} we only consider four biological states for the segments.
%
The \code{acnr} package allows to vary the difficulty level by adding normal cell contamination, thus degrading tumor percentage.  
Three levels of difficulty have been considered by varying tumor percentage: 100\% (easy case), 70\%, and 50\% (difficult case). 
Figure~\ref{fig:example-data} displays three examples of simulated profiles (one for each tumor purity level). 

%
\begin{figure}[ht]
  \centering
  \includegraphics[width=0.79\columnwidth]{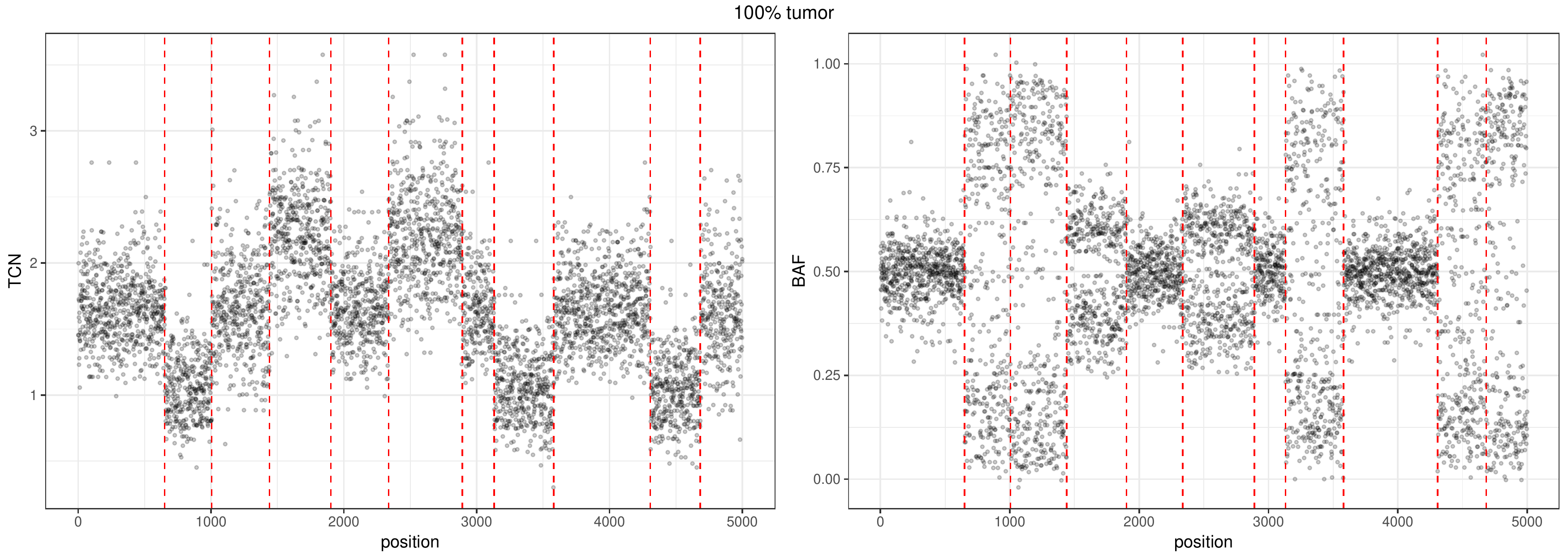}
  \includegraphics[width=0.79\columnwidth]{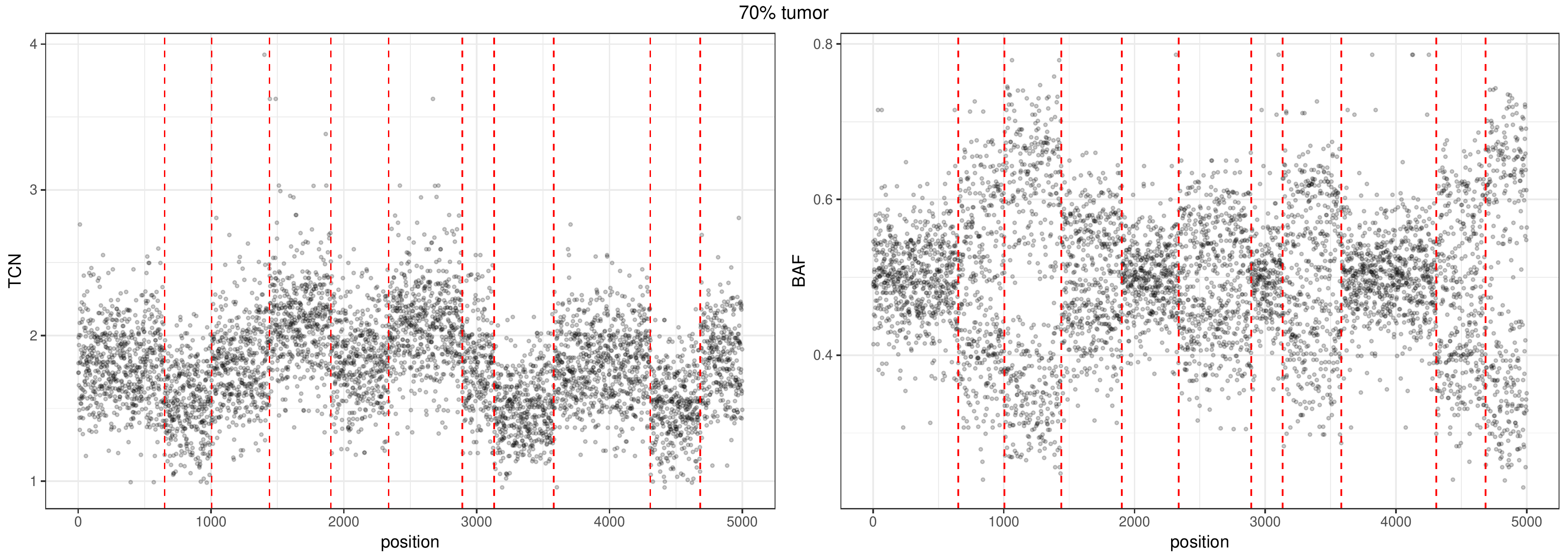}
  \includegraphics[width=0.79\columnwidth]{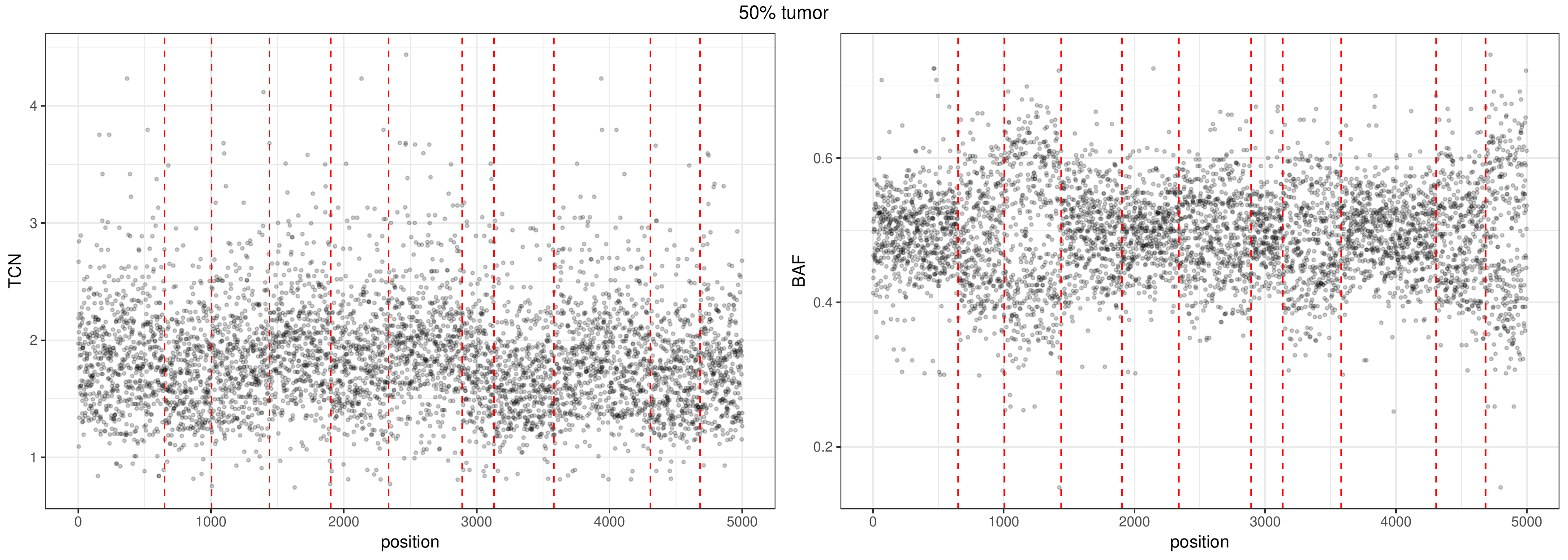}
  \caption{Benchmark 1:  Profiles simulated with the \code{acnr} package. Each line corresponds to a tumor percentage (100\%, 70\% and 50\%). The first column correspond to copy number (TCN) and the second to the allele B fraction (BAF).}
  \label{fig:example-data}
\end{figure}
For each level, $N=50$ profiles (with the same segment states and change-points) are generated making a total of $150$ simulated profiles both for BAF and TCN.
%





\subsection{Competing procedures}
\label{sec:competing-proc}

\subsubsection{\Kern and \aKern}
\label{sec:kernel-used}
The implemented exact algorithm \Kern (corresponding to Algorithm~\ref{algo:v3}) and its fast approximation \aKern (based on binary segmentation) are applicable with any kernel (Gaussian, exponential, polynomial, \ldots).
In our simulation experiments we consider three kernels. 
\begin{itemize}
	\item The first one is the so-called linear kernel defined by $k(x,y) = \scal{x,y}$, where $x,y \in\R$. It is used as a benchmark since \Kern with this kernel reduces to the procedure of \cite{Lebarbier2005}. The corresponding procedure is denoted by \KernSegLin.
	
	\item The second one is the Gaussian kernel defined for every $x,y\in\R$ by
	\begin{align*}
	k_{\delta}(x,y)= \exp\croch{ \frac{ - \abs{ x-y}^2}{\delta}}, \qquad \forall \delta>0 .
	\end{align*}
	Since it belongs to the class of characteristic kernels \citep{SriFuLa_2010}, it is a natural choice to detect any abrupt changes arising in the full distribution \cite{Arlot2011}.  
We call this procedure \KernSegGauss.
	
	\item The third one is the kernel associated with the energy-based distance introduced in Eq.~\eqref{eq.energy.based.kernel} with $\alpha=1$ and $x_0=0$. This particular choice is the prescribed value in the ECP package \citep{ecp}. 
	We call this procedure \KernSegECP.
\end{itemize}
These three kernels allow to  $(i)$ illustrate the interest of characteristic kernels compared to non characteristic ones, and $(ii)$ assess the performances of change-point detection with kernels (\Kern) compared to other approaches (\ECP, \RBS).
However other characteristic kernels such as the Laplace or exponential ones (see Section~\ref{sec:characteristic}) could have been considered as well.

For all kernels we considered $D_{max}=100$.
Note also that for all approaches and for both TCN and BAF profiles we first scaled the data using a difference based estimator of the variance. To be specific we get an estimator of the variances by dividing by $\sqrt{2}$ the median absolute deviation of disjoint successive differences. This is common practice in the change-point literature (see for example \citep{Fryzlewicz2012}). Such estimators are less sensitive to any shift in the mean than the classical ones.
For the Gaussian kernel we then used $\delta=1$.

As mentioned earlier, one main asset of kernels is that they allow to easily perform data fusion, which consists here in combining several data profiles to increase the power of detecting small changes arising at the same location in several of them.
Here the joint segmentation of the two-dimensional signal (TCN, BAF) is carried out by defining a new kernel as the sum of two coordinate-wise kernels \cite{Aron:1950}, that is
\begin{align}
k(x_1,x_2) =  k(c_1,c_2) +  k(b_1,b_2)  \label{eq.def.noyau.mixture}
\end{align}
with $x_1 = (c_1,b_1)$ and $x_2=(c_2,b_2)$ where the first coordinates of $x_1$ and $x_2$ refer to TCN and the second ones to BAF.
	\begin{rk}
		Let us point out that many alternative ways exist to build such a ``joint kernel'', using the standard machinery of reproducing kernels exposed in \cite{Aron:1950,Gart_2008}. 
		
		For instance replacing the sum in Eq.~\eqref{eq.def.noyau.mixture} by a product of kernels is possible. With the Gaussian kernel, this amounts to consider one Gaussian kernel applied to a mixture of squared norms where each coordinate receives a different weight depending on its influence.
		Another promising direction is to exploit some available side information about the importance of each coordinate in detecting change-points. This can be done by considering a convex sum of kernels where the weights reflect this \emph{a priori} knowledge.
		
		Finally let us mention that designing the optimal kernel for a learning task is a widely open problem in the literature even if some attempts exist (see Section~7.2 in \cite{Arl_Cel_Har:2012:v1} for a thorough discussion, and \cite{GSSBPFS_2012} for a first partial answer with two-sample tests).
	\end{rk}

\subsubsection{ECP}
\label{sec:ecp-description}

The ECP procedure \citep{matteson2014nonparametric} (earlier discussed in Section~\ref{sec.link.other.approaches}) has been also introduced in our comparison since it allows us to detect changes in the distribution of multivariate observations.

We used the implementation provided by the authors in the R-package \cite{ecp}. We used the default parameters $\alpha=1$ and $\ell = 30$ (minimum length of any segment). 
Let us notice that, unlike our kernel-based procedures relying on efficiently minimizing a prescribed penalized criterion, \ECP chooses the number of segments by iteratively testing each new candidate change-point by means of a permutation test, which makes it highly time-consuming on large profiles (around 15 minutes per profiles for $n=5000$ compared to 5 seconds for \KernSegGauss).

\subsubsection{Recursive Binary Segmentation (RBS)}
\label{sec:rbs-description}

In the recent paper by \cite{Pierre-Jean08092014}, it has been shown that for a known number of change-points the Recursive Binary Segmentation (\RBS) \citep{gey08using} is a state-of-the-art change-point procedure for analyzing (TCN, BAF) profiles. 
\RBS is a two-step procedure. In a first step it uses the binary segmentation heuristic (described in Section~\ref{sec.binary.segmentation}) on the (TCN) or (TCN,BAF) profile. In a second step it uses dynamic programming 
on the set of changes identified by the binary segmentation heuristic. 
We refer interested readers to \cite{Pierre-Jean08092014}\xspace for a discussion as to why \RBS can outperform a pure dynamic programming strategy despite the fact that it provides only an approximation to the targeted optimization problem.

Since the present biological context is the same as that of  \cite{Pierre-Jean08092014}, we therefore decided to carry out the comparison between our kernel-based procedures and \RBS. 

From a computational perspective \RBS relies on the binary segmentation algorithm described in Algorithm~\ref{heur:v2}. 
The final segmentation output by \RBS is then an approximate solution to the optimization problem (in the same way as \aKern), while being efficiently computed as illustrated by Figure~\ref{fig:time2}.


\subsection{Performance assessment}
\label{sec:perf-crit}

The quality of the resulting segmentations is quantified in two ways.
First we infer the ability of the procedure to provide a reliable estimate of the regression function by computing the quadratic risk of the estimator based on the TCN profile (Section~\ref{sec:piec-const-mean}). 
Second, we also assess the quality of the estimated segmentations by measuring the discrepancy between the true and estimated change-points using the Frobenius distance
 (Section~\ref{sec:accur-segm-1}).


\subsubsection{Risk of a segmentation}
\label{sec:piec-const-mean}

From a practical point of view, there is no hope to recover true change-points in regions where the signal-to-noise ratio is too low without including false positives, which we would like to avoid.
In such non-asymptotic settings, the quality of the estimated segmentation $\tau$ can be measured by the risk $R(\hat f^{\tau})$ which measures the gap between the regression function $f=(f_1,\ldots,f_n)\in\R^n$ and its piecewise-constant estimator based on $\tau$, that is $\hat f^{\tau}=\paren{\hat f^{\tau}_1,\ldots,\hat f^{\tau}_n}\in\R^n$.
This risk is defined by
\begin{align*}
R(\hat f^{\tau}) = \frac{1}{n}\sum_{i=1}^n \E\croch{ \paren{ f_i - \hat f^{\tau}_i}^2 } .
\end{align*}
In the following simulation results, the risks of all segmentations are always computed with respect to the regression function of the corresponding TCN profile.

\subsubsection{Frobenius distance}
\label{sec:accur-segm-1}

We also quantify the gap between a segmentation $\tau$ and the true segmentation $\tau^*$ by using the Frobenius distance \cite{LajugieBachArlot_2014} between matrices as follows.
First, for any segmentation $\tau=\paren{\tau_1,\tau_2,\ldots,\tau_{D} }$, let us introduce a matrix $M^\tau = \acc{M^\tau_{i,j}}_{1\leq i,j\leq n}$ such that
\begin{align*}
 M^\tau_{i,j} = \sum_{k=1}^{D} \frac{\1_{( \tau_{k}\leq i,j < \tau_{k+1} )} }{\tau_{k+1}-\tau_{k}}, \hspace*{2cm} \mbox{(with $\tau_1=1$ and $\tau_{D+1}=n+1$ by convention)}
\end{align*}
where $\1_{( \tau_{k}\leq i,j < \tau_{k+1})}=1$, if $i,j \in [\tau_{k},\tau_{k+1}[\cap\mathbb{N}$, and 0 otherwise.
Note that $M^\tau_{i,j}\neq 0$ if and only if $i,j$ are in the same segment of $\tau$, which leads to a block-diagonal matrix with D blocks (whose squared Frobenius norm is equal to D).
The idea behind the value in each block of this matrix is to define a one-to-one mapping between the set of segmentations in D segments and matrices whose squared Frobenius norm is D.

 

Let us now consider the matrix $M^{\tau^\star}$ defined from the true segmentation $\tau^*$ in the same way.
Then, the Frobenius distance between segmentations $\tau$ and $\tau^*$ is given, through the distance between matrices $M^\tau$ and $M^{\tau^\star}$, by 
\begin{align*}
d_F\paren{\tau,\tau^\star} = \norm{ M^\tau - M^{\tau^\star}}_F=  \sqrt{ \sum_{i,j=1}^n  \paren{ M^\tau_{i,j} -M^{\tau^\star}_{i,j}  }^2} .
\end{align*}

\subsection{Results}

In our experiments, we successively considered two types of signals: $(i)$ the total copy number profiles (TCN)
and $(ii)$ the joint profiles in $\R^2$ made of (TCN,BAF).

\subsubsection{Comparison with \KernSegLin and \ECP for a high tumor percentage (easy case)}
First we compare all approaches in the simple case where the tumor percentage is equal to 100\%.
The performances, using only the TCN or the (TCN,BAF) profiles, are reported in Figure~\ref{fig:easy_case} and measured in terms of accuracy (Left) and risk (Right).
\begin{figure}[h!]
	\hspace*{-1cm}
	\begin{tabular}{cc}
	\includegraphics[width=.5\textwidth, clip=true, trim= 0cm 0cm 1cm 2cm]{./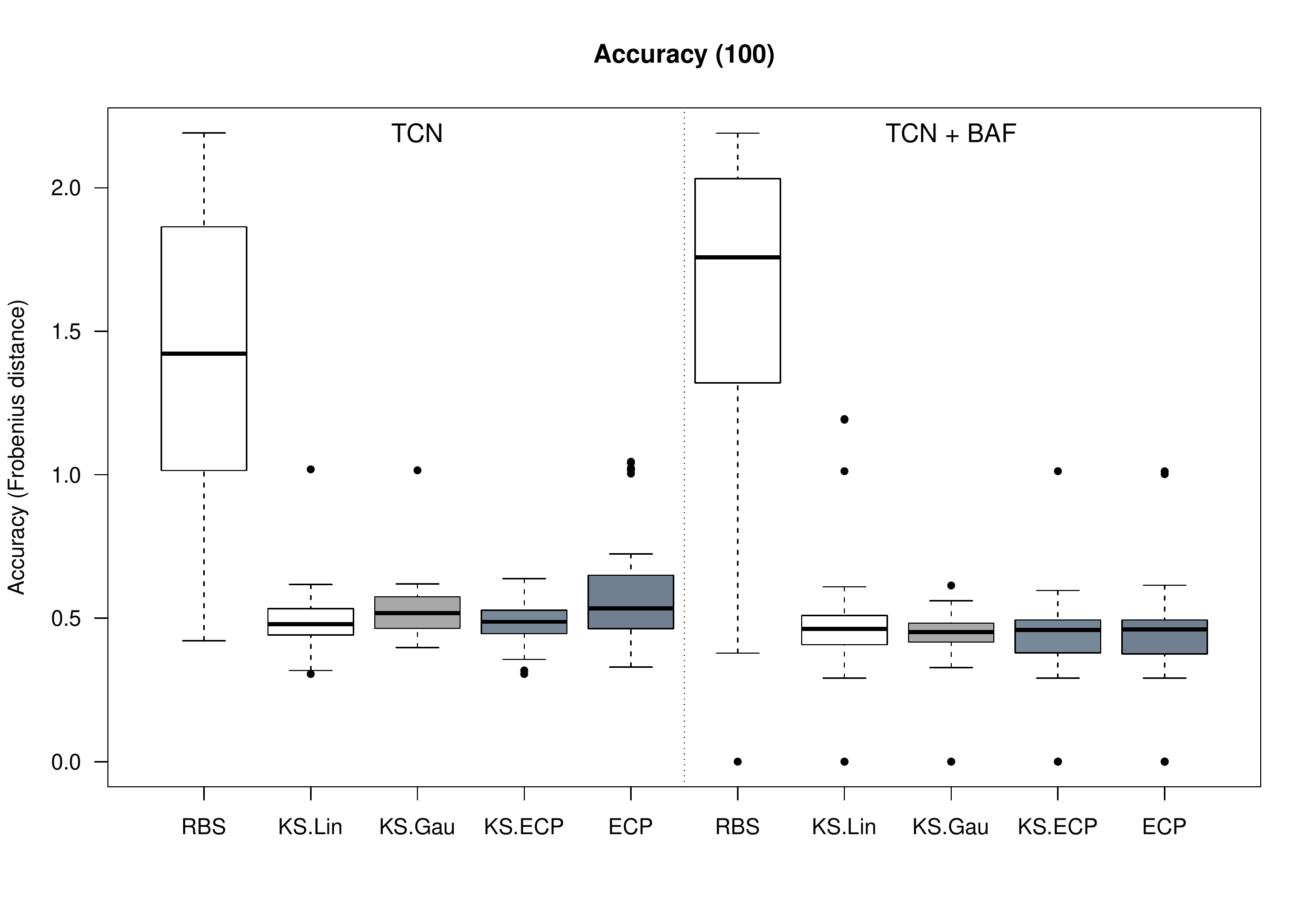} &
        \includegraphics[width=.5\textwidth, clip=true, trim= 0cm 0cm 1cm 2cm]{./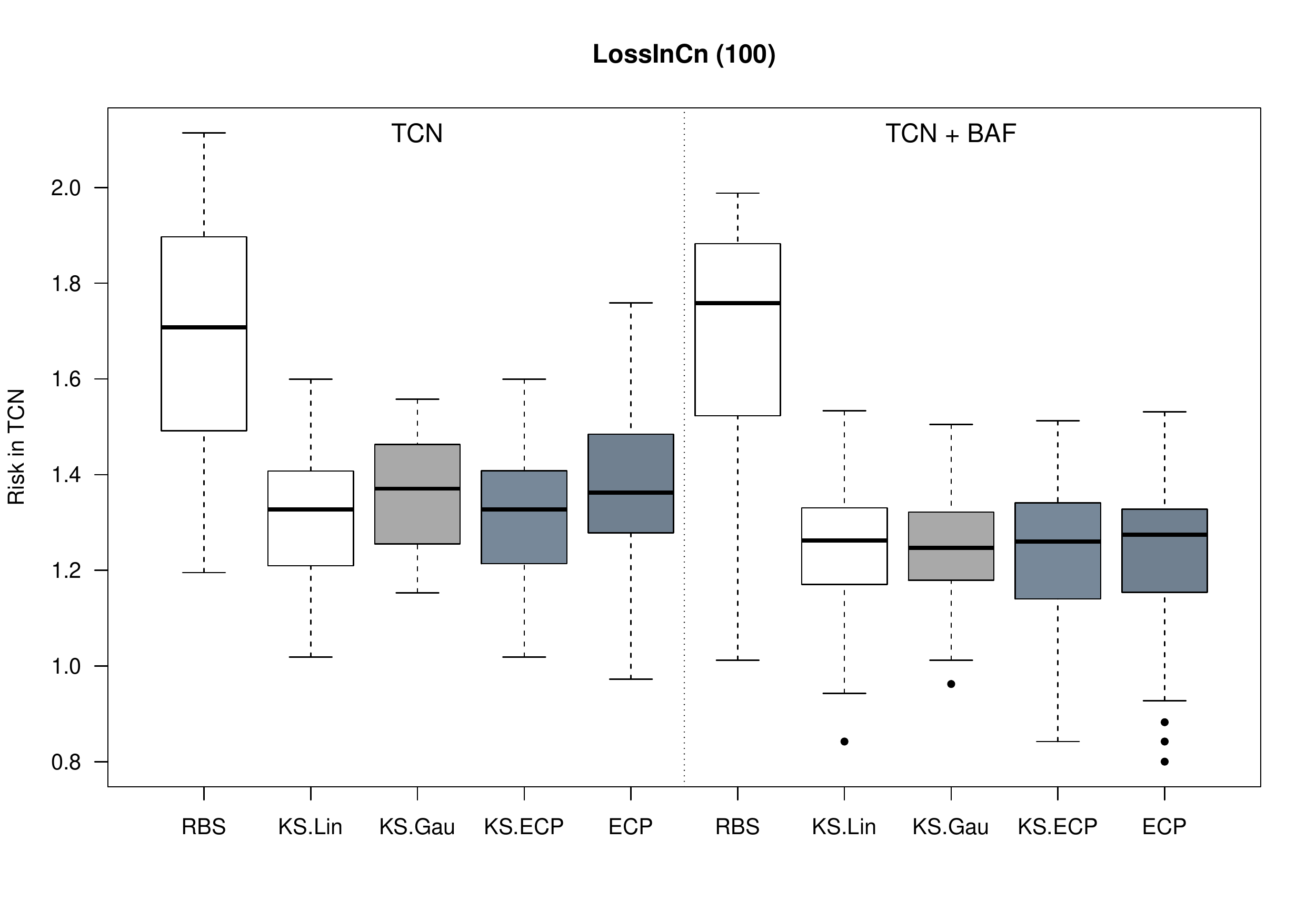}
        \end{tabular}
	\caption{Accuracy (left) and Risk (right) on TCN and (TCN,BAF) profiles with a tumor percentage of 100\%. Boxplots of \RBS, \ECP, \KernSegLin, \KernSegGauss and \KernSegECP for their selected number of change-points ($\hat{D}$) are shown.}
		\label{fig:easy_case}	
\end{figure}

In all these experiments \RBS clearly performs badly. We believe this is mostly due to the poor estimation of the number of segments made by \RBS. 
Indeed the performances of \RBS
are closer to the ones of other approaches when considering the true number of segments (results not shown here).

We then compare \KernSegLin to \KernSegGauss, \KernSegECP,  and \ECP.
With TCN data, \KernSegLin has a small advantage over \KernSegGauss (with an average accuracy difference of 0.03 and a p-value of 0.012) and \ECP (with an average accuracy difference of 0.1 and p-values of 0.007).
This is also true when considering the risk.
\KernSegECP has a slightly better empirical average accuracy than \KernSegLin but this difference is not significant.
Let us also mention that none of the differences are found significant with the (TCN, BAF) profiles.
It is our opinion that in this simple scenario all true change-points arise mostly in the mean of the distribution. 
Thus it is noticeable that the performances of approaches also looking for changes in the whole distribution (like \ECP, \KernSegGauss, \KernSegECP) are (almost) on par with those specifically looking for change-points in the mean (like \KernSegLin and \RBS for the true number of change-points $D^\star$).

We also compared \ECP to \KernSegGauss and \KernSegECP. We found no differences except between \ECP and \KernSegECP for TCN profiles. In that case \KernSegECP has significantly better accuracy and risk than \ECP. But this difference remains small as can be seen on Figure~\ref{fig:easy_case}.

Note that for all approaches, performances on (TCN,BAF) profiles are slightly better than those with TCN profiles (p-values smaller than $10^{-4}$).

\subsubsection{Constraint on the segment sizes for a low tumor percentage (difficult case)}
We then turn to the more difficult case where the tumor percentage is equal to 50\%.
In this scenario excluding segments with less than 30 points (as
is done by default in \ECP) is beneficial. 
Figure~\ref{fig:hard_minsize} illustrates this strong improvement when adding this constraint to \KernSegLin, \KernSegGauss and \KernSegECP and when considering the true number of change-points $D^\star=10$ (p-values of respectively ($8.10^{-9}$, $9.10^{-3}$ and $10^{-4}$).
More generally it is our experience that such a constraint can greatly improve performances when the signal-to-noise ratio is low.
For this reason, in the remainder of our experiments and for a tumor percentage of 50\%, we will report results including the constraint on the segment sizes ($\ell =30$). 
Let us also mention that for higher tumor percentages adding the constraint does not change the segmentation in $D^\star$ segments recovered by \KernSegLin, \KernSegGauss and \KernSegECP.

\begin{figure}[t]
	\centering
	\includegraphics[width=.8\textwidth, clip=true, trim= 0cm 0cm 1cm 2cm]{./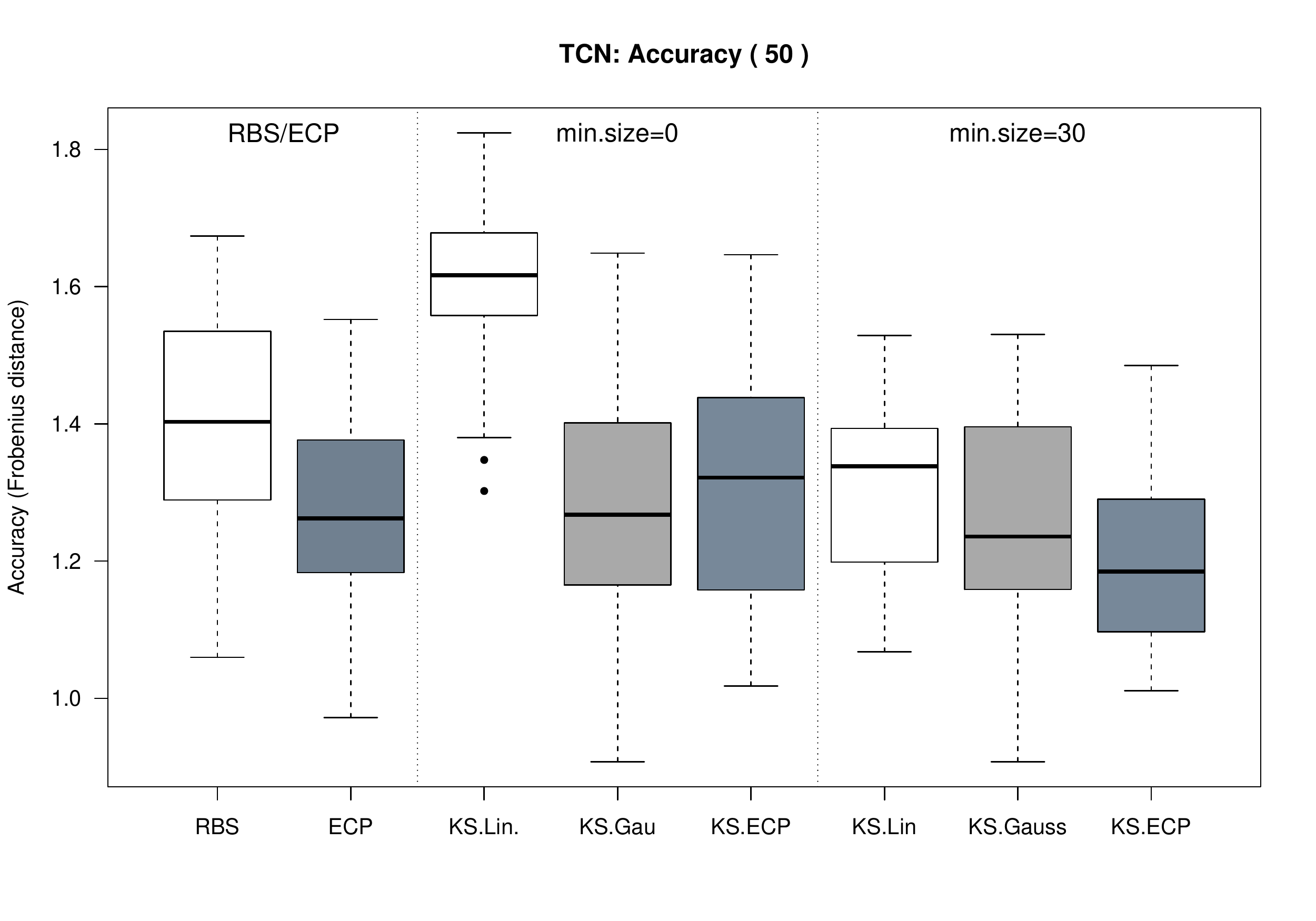}
	\caption{Performances of \KernSegLin, \KernSegGauss and \KernSegECP for the true number of changepoints
$D^\star =10$ with or without a constraint on the minimal size of segments (left: $\ell \geq 1$, right: $\ell \geq 30$). 
The results for \RBS and
\ECP for $D^\star =10$ are also reported. \RBS does not include a constraint while \ECP has a default minimal size of $30$.
}
	\label{fig:hard_minsize}. 
\end{figure}

\subsubsection{Comparison with \KernSegLin and \ECP for a low tumor percentage (difficult case)}

We compared \KernSegLin to \KernSegGauss, \KernSegECP,  and \ECP for a tumor percentage of 50\%.
The accuracy of all these approaches is reported in Figure~\ref{fig:hard_case}. The minimum length of any segment is fixed at $\ell=30$ for all approaches (except \RBS as it is not possible) and the number of segments is estimated.
\begin{figure}[h!]
	\hspace*{-1cm}
	\begin{tabular}{cc}
	\includegraphics[width=.5\textwidth, clip=true, trim= 0cm 0cm 1cm 2cm]{./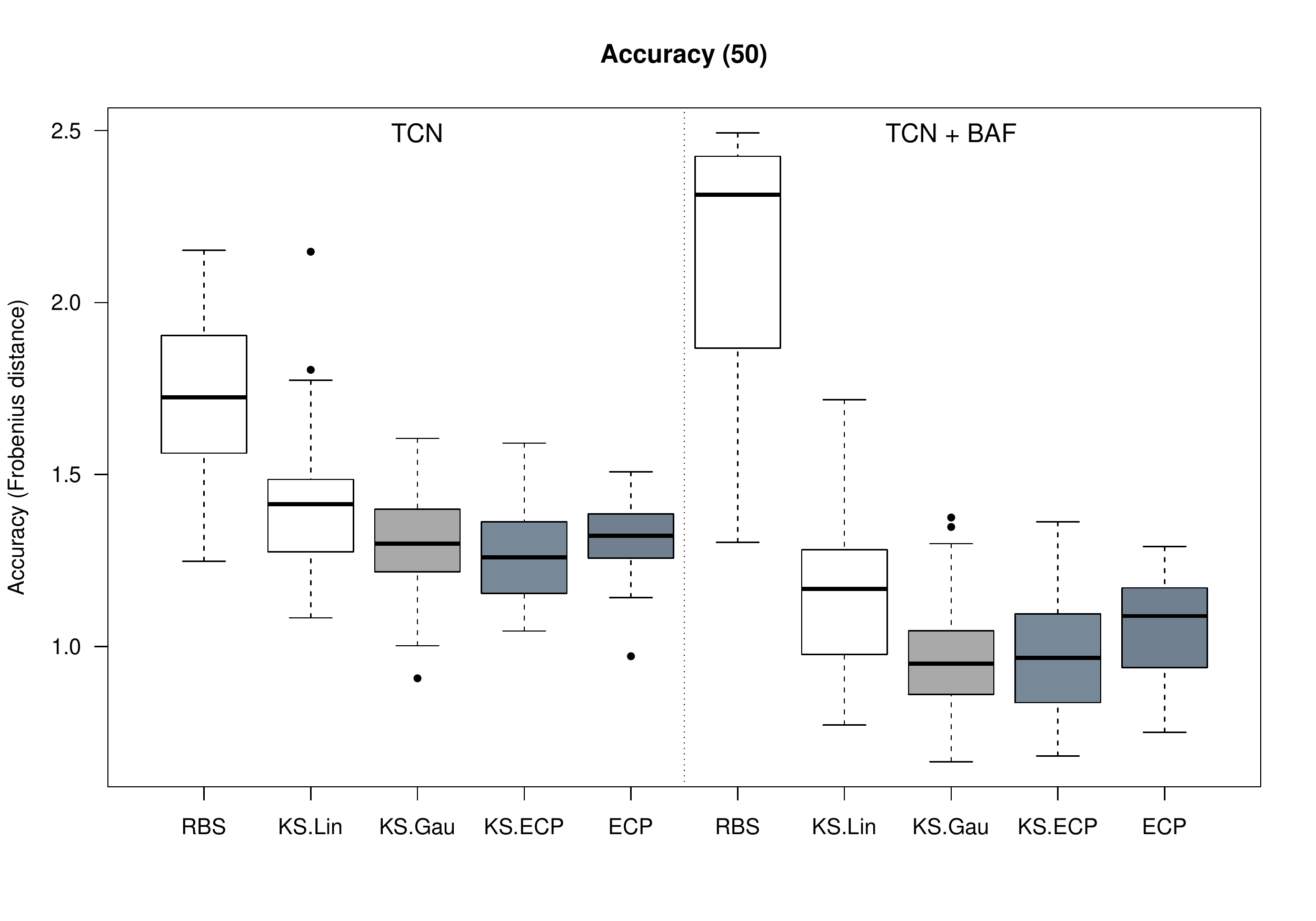} &
        \includegraphics[width=.5\textwidth, clip=true, trim= 0cm 0cm 1cm 2cm]{./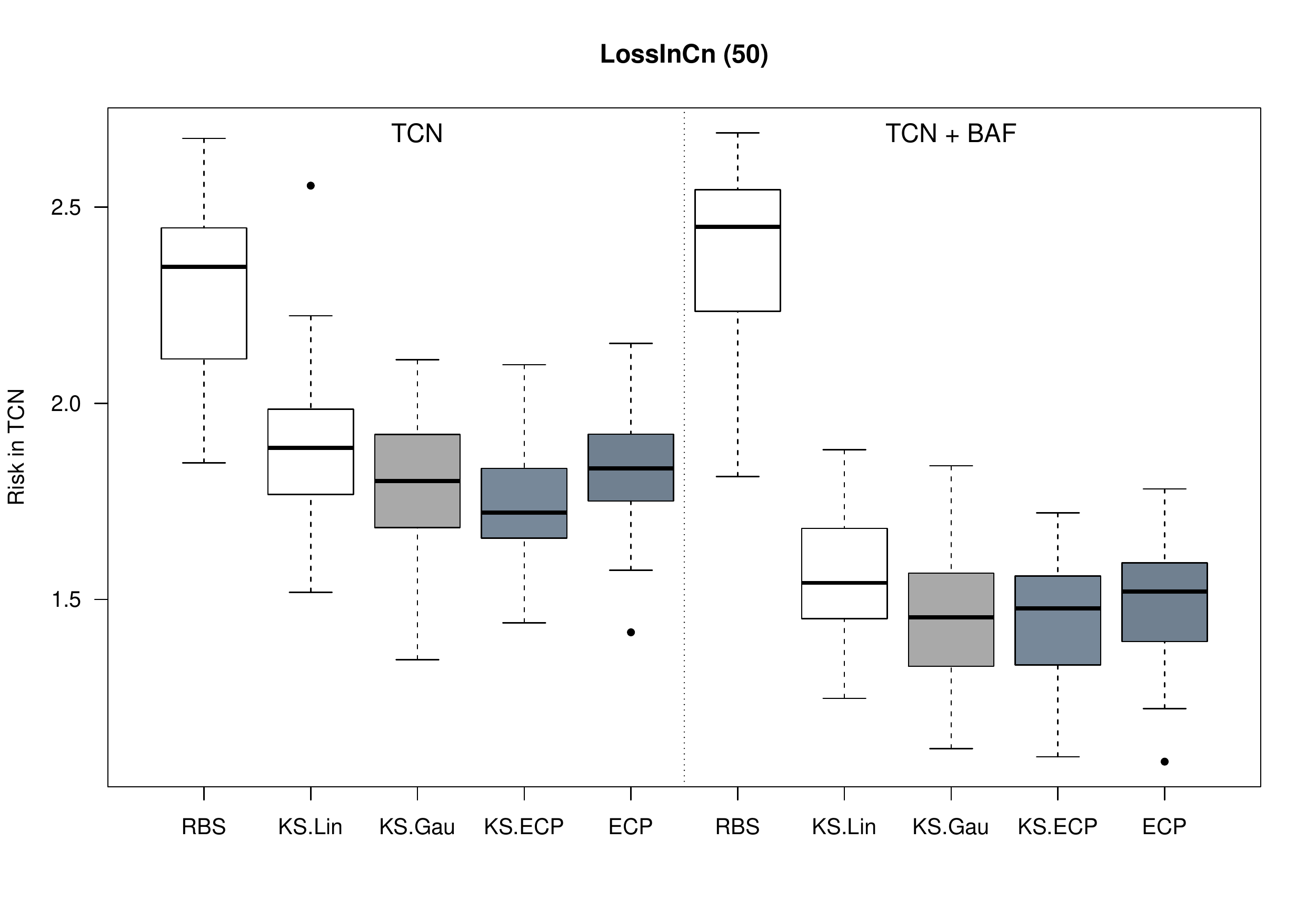}
        \end{tabular}
	\caption{Accuracy (left) and Risk (right) on TCN and (TCN,BAF) profiles with a tumor percentage of 50\%. Boxplots of \RBS, \ECP, \KernSegLin, \KernSegGauss and \KernSegECP for their selected number of changepoints ($\hat{D}$) are shown.}	
	\label{fig:hard_case}
\end{figure}

In all these experiments \RBS performs badly. We believe this is because it poorly selects the number
of segments and also because it does not include a constraint on segment sizes.

We compared \KernSegLin to \KernSegGauss, \KernSegECP,  and \ECP.
For both TCN and (TCN,BAF) profiles \KernSegLin performs worse than \KernSegGauss, \KernSegECP,  and \ECP in terms of accuracy and risk (all p-values are smaller than $2.10^{-4}$).
In this more difficult scenario, changes do not arise only in the mean of the distribution, which gives an advantage to approaches looking for changes in the whole distribution and not only in the mean as \KernSegLin does.

We then compare \ECP to \KernSegGauss and \KernSegECP. 
First, \KernSegGauss seems to have a slightly better accuracy and risk than \ECP for both TCN and (TCN,BAF) profiles. 
Two of these differences are found significant with a cut-off of 5\% and none with a cut-off of 1\%. 
This leads us to conclude that \ECP and \KernSegGauss have similar performances in the present experiments.
Second, \KernSegECP has a slightly better accuracy and risk than \ECP in TCN for both TCN and (TCN,BAF) profiles. 
All of these differences are found significant (for the accuracy in (TCN,BAF) $p=0.0076$, in TCN $p=0.015$, for the risk in (TCN,BAF) $p=0.0081$ and in TCN $p=0.00016$). 
Although significant these differences remain small (about
three times smaller than the differences between \KernSegLin and \ECP).

Finally it should be noted that \KernSegECP is faster than \ECP for a profile of $n=5000$ (5 seconds against 15 minutes).
All of this leads to conclude that, in our simulation experiments, \KernSegGauss and \KernSegECP are the best change-point detection procedures among the considered ones since they perform as well as \ECP while being by far less memory and time consuming.

\subsubsection{Estimation of the number of segments including the minimum length constraint}

Let us now assess the behaviour of the model selection procedure derived in Section~\ref{sec.constrained.segmentations} by taking into account the new constraint on the minimal length of the candidate segments.
\begin{figure} 
\centering
\includegraphics[width=.7\textwidth, clip=true, trim= 0cm 0cm 1cm 2cm]{./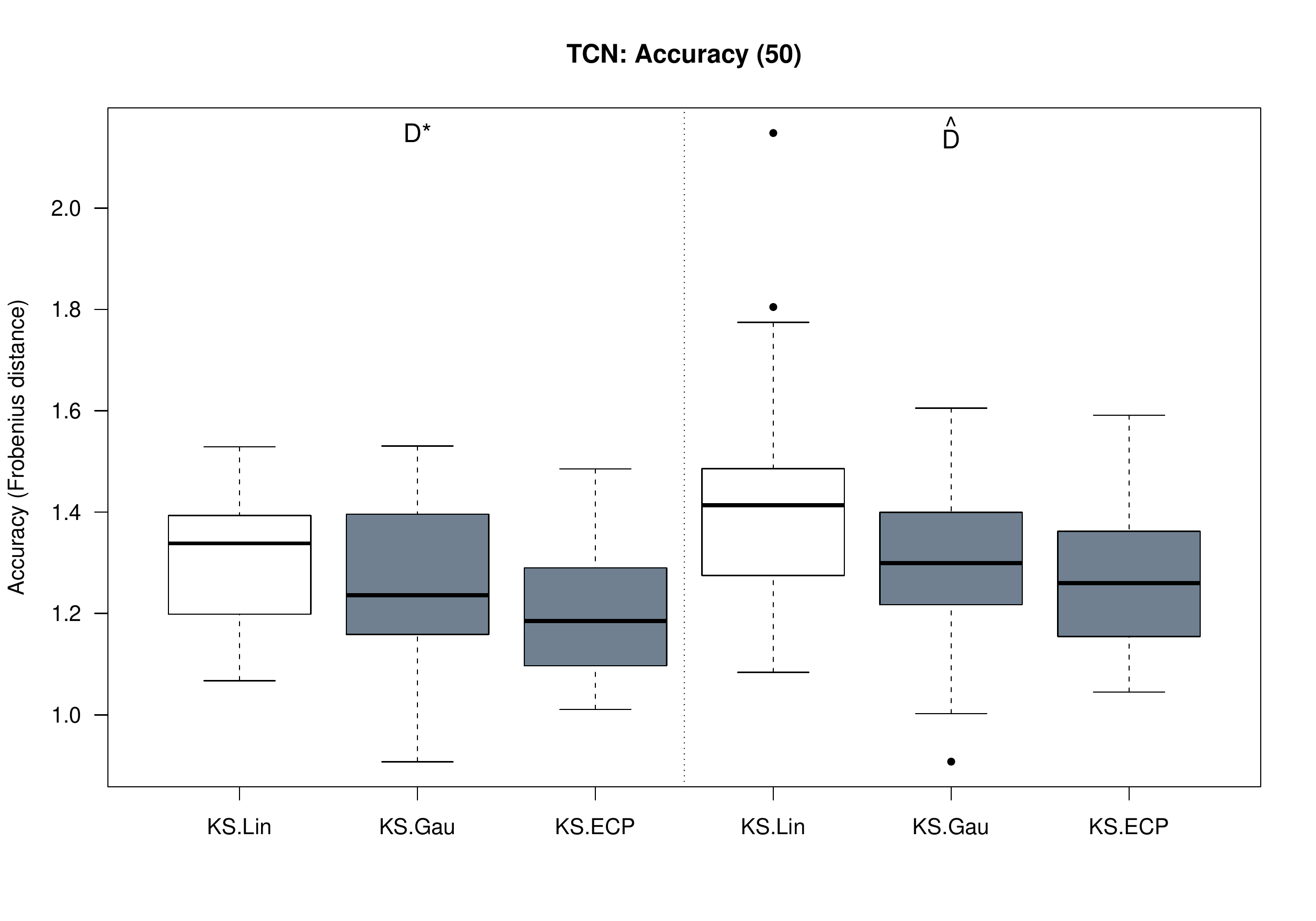} 
\caption{Accuracy of \KernSegLin,  \KernSegGauss and \KernSegECP on (TCN,BAF) for a tumor percentage of 50\% for $D*$ (left) and $\hat{D}$ (right). }
\label{fig:model_sel}
\end{figure}

From Figure~\ref{fig:model_sel} it can be seen that whatever the kernel the performances of the \Kern procedure at the estimated number of segments are worse than those at $D^\star$. But this difference remains very small.
This empirically validates the use of our modified penalty taking into account a constraint on the size of the segments.
We recall that adding this constraint is important in low-signal-to-noise settings, which are common in practice.

\subsubsection{Quality of the approximation}
\label{sec:approx}

The purpose of the present section is to illustrate the behaviour of \aKern (in terms of statistical precision) as an alternative to \Kern (which is more time consuming).
Since we do not provide any theoretical warranty on the model selection performances of \aKern, we only show its results for several values of $p\in \acc{4,10,40,80,160}$ at the true number of segments $D^\star $.
For each value of $p$ and each of the TCN and BAF profiles, we compute the approximation by: $(i)$ evaluating the smallest and largest observed value (respectively denoted by $m$ and $M$), $(ii)$ using an equally spaced grid of $p$ deterministic values between $m$ and $M$ and $(iii)$ use those $p$ values to perform the approximation of the Gram matrix.

From Figure~\ref{fig:approx.accuracy} it clearly appears that the number of points used to build the low-rank approximation to the Gram matrix is an influential parameter that has to be carefully fixed.
However as long as $p$ is chosen large enough, the approximation seems to provide very similar results.
\begin{figure}[t]
	\centering
	\includegraphics[width=.7\textwidth, clip=true, trim= 0cm 0cm 1cm 2cm]{./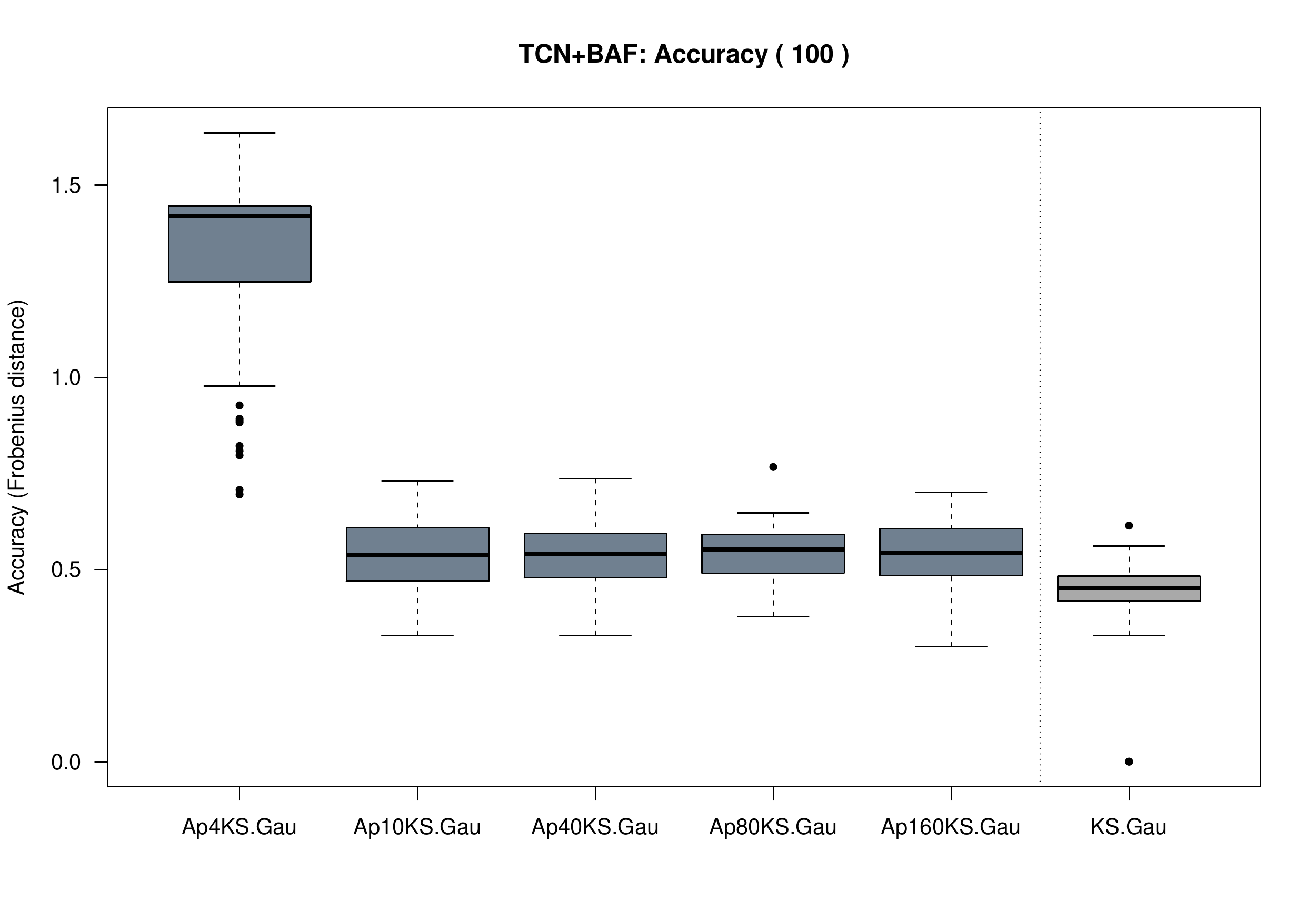}
	\caption{ Accuracy of \aKern with the Gaussian Kernel and for various $p$ and of \KernSegGauss on (TCN,BAF) for a tumor percentage of 50\% for $D*$.  } 
	\label{fig:approx.accuracy}
\end{figure}
This suggests that one should find a trade-off between the statistical performances and the computation cost. Indeed from a statistical point of view increasing $p$ is beneficial (or at least not detrimental). In contrast from a computationnal point of view increasing $p$ is detrimental and increases the complexity in time ($O(p^2n)$).

Let us finally emphasize that for large enough $p$ the performances of \aKern are very close to those of \KernSegGauss.
Given the low time complexity of \aKern compared to \KernSegGauss we argue that for large profiles ($n \gg 10^5$)
\aKern could be an interesting alternative to \KernSegGauss.

Nevertheless several questions related to the use of \aKern remain open. 
For instance, the optimal way to build the low-rank approximation of the Gram matrix is a challenging question which can be embedded in the more general problem of choosing the optimal kernel.
Designing a theoretically grounded penalized criterion to perform model selection with \aKern is also a crucial problem which remains to be addressed.

\section{Conclusion}
\label{sec:discussion}

Existing nonparametric change-point detection procedures such as that of \cite{Arlot2011} exhibit promising statistical performances. Yet their high computational costs (time and memory) are severe limitations that often make it difficult to use for practitioners. Therefore an important task is to develop computationally efficient algorithms (leading to exact or approximate solutions) reducing the time and memory costs of these statistically effective procedures. 

In this paper we focus on the multiple change-points detection framework with reproducing kernels.
We have detailed a versatile (\emph{i.e.} applicable to any kernel) exact algorithm which is quadratic in time and linear in space. We also provided a versatile approximation algorithm which is linear both in time and space and allows to deal with very large signals ($n\geq 10^6$) on a standard laptop. 
The computational efficiency in time and space of these two new algorithms has been illustrated on empirical simulation experiments showing that the new algorithms is more efficient than its direct competitor \ECP. 
The statistical accuracy of our kernel-based procedures has been empirically assessed in the setting of DNA copy numbers and allele B fraction profiles. 
In particular, results illustrate that characteristic kernels (enabling the detection of changes in any moment of the distribution) can lead to better performances than procedures dedicated to detecting changes arising only in the mean.

\section*{Acknowledgements}

This work has been funded by the French Agence Nationale de la Recherche (ANR) under reference ANR-11-BS01-0010, by the CNRS under the PEPS BeFast, by the CPER Nord-Pas de Calais/FEDER DATA Advanced data science and technologies 2015-2020, and by Chaire d'excellence 2011-2015 Inria/Lille 2.

\section*{References}
\label{sec:references}

\bibliographystyle{elsarticle-harv}
\bibliography{biblio,biblio_Alain}

\end{document}